\documentclass{amsart}

\makeatletter
\@namedef{subjclassname@2020}{\textup{2020} Mathematics Subject Classification}
\makeatother

\usepackage{colonequals}

\usepackage{amsthm,amstext,amscd,amsbsy,amsxtra,latexsym}
\usepackage{amssymb}

\usepackage{hyperref}
\hypersetup{
	colorlinks   = true, 
	urlcolor     = blue, 
	linkcolor    = blue, 
	citecolor   = red 
}

\usepackage[msc-links]{amsrefs}

\usepackage[usenames,dvipsnames]{color}

\usepackage[all]{xy}

\usepackage[shortlabels]{enumitem}

\newtheorem{thm}{Theorem}
\newtheorem {prop}{Proposition}
\newtheorem {lem}{Lemma}
\newtheorem {defi}{Definition}
\newtheorem {exa}{Example}

\newtheorem {rem}{Remark}

\usepackage[capitalise]{cleveref}
\crefname{thm}{Thm.}{}
\crefname{prop}{Prop.}{}
\crefname{lem}{Lem.}{}
\crefname{cor}{Cor.}{}
\crefname{prob}{Prob.}{}
\crefname{figure}{Fig.}{}
\crefname{equation}{Eq.}{}
\crefname{exa}{Exa.}{}
\crefname{conj}{Conj.}{}
\crefname{defi}{Def.}{}
\crefname{rem}{Remark}{}
\crefname{section}{Sec.}{}

\DeclareMathOperator\wh{\mathcal S}   					
\DeclareMathOperator\lwh{\mathfrak s}   					

\DeclareMathOperator{\lcm}{lcm}

\DeclareMathOperator\Pic{Pic}

\DeclareMathOperator\Supp{Supp}
\DeclareMathOperator\Cl{Cl}
\DeclareMathOperator\CaDiv{CaDiv}
\DeclareMathOperator\WeDiv{WeDiv}
\DeclareMathOperator\ord{ord}

\DeclareMathOperator\sing{Sing}

\DeclareMathOperator\gl{GL}

\DeclareMathOperator\Proj{Proj}
\DeclareMathOperator\Spec{Spec}
\DeclareMathOperator\wt{wt}

%


\newcommand\A{\mathbb A}
\newcommand\N{\mathbb N}
\newcommand\Z{\mathbb Z}
\newcommand\Q{\mathbb Q}
\newcommand\R{\mathbb R}
\newcommand\C{\mathbb C}
\newcommand\V{\mathbb V}

\newcommand\X{\mathcal X}

\newcommand\cC{\mathcal C}
\newcommand\M{{\mathcal M}}

\newcommand\cI{\mathcal I}

\newcommand\Wc{\mathcal W}
\newcommand\cO{\mathcal O}
\newcommand\Y{\mathcal Y}
\newcommand\cL{\mathcal L}

\newcommand\w{\mathfrak q}
\newcommand\iso{\cong}
\newcommand{\kk}{{\bar k}}

\newcommand\hn{\mathfrak s}					
\newcommand\wP{\mathbb{WP}}                          	

\newcommand\x{\mathbf x}
\newcommand\y{\mathbf y}

\newcommand\il{\zeta}
\newcommand\tx{\tilde x}

\def\P{\mathbb P}
\def\a{\alpha}

\def\p{\mathfrak p}

\def\u{\mathfrak u}

\def\div{\mbox{div}}


\begin{document}

\title[LOCAL AND GLOBAL HEIGHTS ON WEIGHTED PROJECTIVE VARIETIES]{LOCAL AND GLOBAL HEIGHTS ON WEIGHTED PROJECTIVE VARIETIES}

\subjclass[2020]{Primary 	11G50; Secondary 14G40.}

\keywords{Weighted Varieties, Local and Global Weighted  Heights, Closed Subschemes}

\author{Sajad Salami}
\address{Institute of Mathematics and Statistics \& State University of Rio de Janeiro \& Rio de Janeiro, Brazil}
\email{sajad.salami@ime.uerj.br}

\author{Tony  Shaska}
\address[Tony Shaska]{Department of Mathematics \& Statistics, Oakland University, Rochester Hills, MI} 
\email{tanush@umich.edu}

\date{\today}

\begin{abstract}
We investigate local and global weighted heights a-la Weil for weighted projective spaces    via Cartier  and Weil divisors and      extend the definition of weighted heights  on weighted projective spaces from \cite{b-g-sh}   to   weighted  varieties and closed subvarieties. We prove that any line bundle   on a weighted variety    admits a locally bounded weighted  $M$-metric. Using this fact, we  define  local and global  weighted heights for  weighted varieties in  weighted projective spaces and their closed subschemes, and show  their   fundamental  properties.  


\end{abstract}

\maketitle

\setcounter{tocdepth}{3}


\section{Introduction}
Let $\w=(q_0, \cdots, q_n)$ be a tuple of weights  and $\P_{\w, k}^n $ the weighted projective space over a field $k$.  In \cite{b-g-sh} was introduced a new height on   $\P_{\w, k}^n $, called weighted height,  and proved that such height satisfies basic properties of projective heights.
This definition of weighted heights was motivated not only by its computational advantages, but also because such heights are more natural since they are defined on $\P_{\w, k}^n $ and not on some projective space $\P_k^n$ via the Veronese embedding.  
Such heights  have been used in several computations in the moduli space of curves,  rational functions; see \cite{M2, curri, mandili-sh} and are a very useful tool in using machine learning techniques in algebraic and arithmetic geometry. 
However, no complete theory of such heights exists.  For example, weighted heights in \cite{b-g-sh} were not defined analytically via Cartier divisors, local weighted heights via line bundles, global weighted heights for closed subschemes.
To the knowledge of authors this has not been done before.   

The goal of this paper is to introduce and develop the theory of weighted heights, inspired by Weil's approach. 
We achieve this by providing all the necessary tools for understanding and introducing  weighted heights, which have not been extensively covered in the literature. To accomplish this, we focus on developing the theory of Cartier divisors on weighted projective varieties, exploring the analytic structure of weighted varieties, investigating weighted blow-ups, and introducing both local and global weighted heights an showing their fundamental properties.
In our other work \cite{vojta-23}, we state some different versions of  Vojta's conjecture for weighted varieties in  terms of weighted local and global heights,  and  give an application to the greatest common divisor problem.

This paper is organized as follows.  In \cref{sect-3}  we recall some of the basic setup   for   Weil height machinery on
projective spaces and varieties.  In \cref{LWHM} we summarize all properties of local Weil heights and in \cref{GWHM} the  properties of  global Weil heights for such varieties. Such setup will be important later in the  paper to draw an analogy between Weil heights and weighted heights. 

In \cref{sect-2} we establish notation for weighted projective varieties and define  Zariski topology, Veronese embedding, and singular  locus of weighted projective  varieties.   Moreover, we introduce weighted blow-ups and exceptional divisors on weighted projective varieties. 

In \cref{sect-4}  we develop the theory of weighted heights a-la Weil. We  introduce Cartier  divisors on weighted projective varieties and show that results carry over easily to weighted projective varieties.   Moreover, we show that any line bundle on a weighted  variety $\X$ admits a locally bounded weighted $M$-metric.   Given  $\nu \in M_k$, the \textbf{local weighted height  $\il_{\widehat{D}}(-,\nu)$ with respect to} $\widehat{D}$ on weighted variety  $\X$ is defined as
\[
\il_{\widehat{D}}(\x, \nu)=- \log \| g_D(\x)  \|_v,
\]  
for  $\x      \in \X \backslash  \Supp (D)$, 
where   $v\in M$ such that $\nu=v|_k$. Properties of local weighted heights are proved in \cref{WLWHM} as   they are similar to properties of projective heights.
The \textbf{global weighted height $\hn_{\widehat{\cL}}(\x)$ }   with respect to   $\widehat{\cL}$  is defined by 
\[
\hn_{\widehat{\cL}}(\x)\colonequals \sum_{u\in M_K}^{} \il_{\widehat{\cL_g}} (\x, u), 
\]
where   $\il_{\widehat{\cL_g}} (\x, u) = -\log \| g(\x)\|_u$, 
and its   properties  are described in \cref{WGWHM}. 
In \cref{sect-4.6}  we introduce   weighted  local and global  heights associated to closed subschemes of  weighted projective varieties.


\medskip

\noindent \textbf{Notation:}
Since our goal is to provide all the technical details of the theory of weighted heights, in analogy to that of projective heights there is a real possibility of mixing up notation between different heights. 
%
Below we give a list of  notation of Weil heights and weighted heights.  Throughout the paper, the projective space (resp. weighted projective space)      over a field $k$ is denoted by $\P_k^n$  (resp. $\P_{\w, k}^n $). 

\medskip

\begin{center}
	\begin{tabular}{l|l|l}
		Terminology in projective space &      $  \P_k^n$ &    $ \P_{\w, k}^n $   \\ 
		\hline  
		& & \\
		multiplicative height over $k$ & $H_k$ &  $\wh_k$ \\  [0.5ex]
		logarithmic height over $k$ & $h_k$ &  $\lwh_k $ \\  [0.5ex]
		absolute multiplicative height & $H$ &  $\wh$ \\  [0.5ex]
		absolute logarithmic height & $h$ &  $\lwh $ \\  [0.5ex]
		local Weil height  with respect to the divisor   $\widehat{D}$  & $\lambda_{\widehat{D}}(\x,\nu)$      & $\il_{\widehat{D}}(\x, \nu)$\\   [0.5ex]
		global Weil height  with respect to the line bundle    $\widehat{\cL}$  & $h_{\widehat{\cL}}(\x)$ &  $\hn_{\widehat{\cL}}(\x)     $ \\   [0.5ex]
		local  height associated to exceptional divisor  of $\Y$ &  $ \lambda_\Y(\x, \nu) $   &  $ \il_\Y(\x, \nu) $ \\   [0.5ex]
		global  height associated to  exceptional divisor  of $\Y$  & $h_{\Y} (\x)$& $\hn_{\Y}(\x)$ \\   [0.5ex]
		absolute logarithmic  height  on $\X$ wrt divisor $D$ & $h_{\X, D}$ &   $\lwh_{\X, D}$ \\   [0.5ex]
		absolute logarithmic  local  height on $\X$ wrt  divisor $D$ & $\lambda_{\X, D} $&   $\il_{\X, D}$ \\   [0.5ex]
		Singular locus of $\P_{\w, k}^n$ &   & $\sing (\P_{\w, k}^n  )$ \\   [0.5ex]
	\end{tabular}
\end{center}

\medskip

\noindent \textbf{Acknowledgments:}  We want to thank Min Ru for helpful discussions during the period that the last version of this paper was written.

\section{Preliminaries on Weil projective heights}\label{WHVPS} \label{sect-3}
In this section, we review   Weil  heights on varieties in usual projective spaces. One can find more details on the subjects in \cite{bombieri}.

Let $k$ be an algebraic number field of degree $m=[k:\Q]$ and $\kk$ be an algebraically closed field containing $k$. We denote by  $\cO_k$   the  ring of algebraic  integers in $k$. Let   $\X$ be a variety over $k$, i.e.   an integral separated scheme of finite type over $\Spec(k)$ and  $\cO_\X$ the ring sheaf of regular  functions on $\X$. We will use $\X$ to mean $\X(\kk)$ and   $\X(k)$  for the set of $k$-rational points on $\X$. 

Denote by $M_k$ the set of all places of $k$, i.e. the equivalent  classes of  absolute values on $k$. It is a disjoint union of $M_k^0$, the set of all non-archimedian places,  and  $M_k^\infty$,   the set of all Archimedean places of $k$.   More precisely, if $\nu \in M_k^0$, then $\nu=\nu_\p$ for some prime ideal $\p \subset \cO_k$  over a prime number  $p$  such that    $\nu_{\p}|_\Q $ is the $p$-adic absolute value.  If $\nu\in M_k^\infty$, then $\nu=\nu_\infty$ and $\nu_\infty|_\Q$ is the usual absolute value $|\cdot|_\infty$ on $\Q$.
The \textbf{local degree} $n_\nu$ at $\nu \in M_k$  is defined by $n_\nu =[k_\nu:\Q_\nu]$,  where $k_\nu$ and $\Q_\nu$ are the completions with respect to $\nu$.  For each $\nu \in M_k$,  we let $|\cdot|_\nu$ be  a representative  of the equivalence class  which is the $n_\nu$-th power of the one that extends a normalized absolute value over $\Q$.  Since $k$ is a number field, then for  every $x \in k^\ast$   we have the \textbf{product formula}  $\prod_{\nu \in M_k}  |x|_\nu = 1$.  
Given  a finite field extension $K/k$,  we denote  by $M_K$ the set of places  $v$ on $K$ such that  $v\mid_k=\nu$,   for some  $\nu \in M_k$.   Then, we have  the \textbf{degree formula} as 
\[
\sum_{\substack{v \in M_K, \  v|_k=\nu}}[K_v : k_\nu]= [K:k].
\]
%
\subsection{Heights}  \label{WHPS}   
For $x \in k^\ast$, the     \textbf{multiplicative} and  \textbf{logarithmic height}  are defined by 
\begin{equation}\label{proj-height}
	H_k(x) =  \prod_{\nu \in M_k} \max \{ 1, |x|_\nu \}     \quad     \text{ and }  \quad         h_k(x) = \log H_k(x)= \sum_{\nu \in M_k}    \log |x|_\nu.
\end{equation}
For   $\tx =(x_0, \cdots , x_n)  \in k^{n+1}$ and   $v \in M_k$, we let  \[ | \tx |_\nu := \max \{ |x_i|_\nu : 0 \leq i \leq n\}.\] 
One  extends such definitions to the projective space $ \P^n(k)$ by defining the \textbf{multiplicative} and \textbf{logarithmic  height} of $\x = [x_0: \cdots : x_n]  \in \P^n(k)$ by   
\begin{equation}
	H_k(\x)  = \prod_{\nu\in M_k} \max_{0 \leq i \leq n}^{} \{   |x_i|_\nu  \},   \   \ \text{and} \ \
	h_k(\x)  =  \log H_k(\x)= \sum_{\nu \in M_k} \max_{0 \leq i \leq n}^{}  \{ \log  |x_i|_\nu  \}. 
\end{equation}
They are independent of the choice of the coordinates and therefore well defined. 

For any finite extension $K$ of $k$ and  $v \in M_K$, we normalize the absolute value $|\cdot|_v$ such that its restriction $|\cdot|_\nu$ on $k$ satisfies  $|\cdot|_\nu= |\cdot|_v^{[K_\nu: k_\nu]}$.
Using the degree formula,  for   $x \in k^\ast$   we have 
\begin{equation}
	H_k(x)  = H_K(x)^{1/[K:k]}, \ \ \text{and} \ \
	h_k(x)  =\frac{1}{[K:k]} h_K(x),  
\end{equation}
and hence  for all  $\x \in \P^n(k)$,
\begin{equation}
	H_k(\x)  = H_K(\x)^{1/[K:k]}, \ \ \text{and} \ \    h_k(\x)  =\frac{1}{[K:k]} h_K(\x).
\end{equation}
The \textbf{field of definition} of $\x  \in \P^n(\kk)$ is   $k (\x) : = k   \left(  \frac  {x_0} {x_i}, \dots , \frac {x_n} {x_i}   \right)$,  for any $i$ such that $x_i \neq 0$. 
The  \textbf{absolute multiplicative} and \textbf{logarithmic global Weil heights}  of $x\in \kk^\ast$ are defined     by
\[     H(x)=H_K(x)^{1/[K:k]}\ \ \text{and} \  \  h(x)=\frac{1}{[K:k]}h_K(\x), \]
and  for  $\x \in \P^n(\kk)$  by
\begin{equation}\label{gh}
	H(\x)=H_K(\x)^{1/[K:k]},    \ \ \text{and} \ \  h(\x)=\frac{1}{[K:k]} h_K(\x),
\end{equation}
where     $K$ is a number field containing $k(\x)$. The   absolute height is independent of the choice of $K$.   We call  $h(\x)$  the    \textbf{global Weil height} on $\P^n(\kk)$.

\subsection{$M$-bounded sets, functions,  and $M$-metrized line bundles} \label{MSFMLB}
Let $M=M_{\kk}$ be the set of places  on $\kk$  extending those  of  $ M_k$, i.e.,   if $v\in M$ then  $\nu=v|_k$  the restriction of $v$ over $k$ belongs to $ M_k$.

A function $\gamma: M_k \rightarrow \R$ is called  \textbf{$M_k$-constant} if $\gamma(\nu)=0$ for all but finitely many  $\nu \in M_k$.  We extend each  $M_k$-constant $\gamma$ to  a function $ \gamma: M \rightarrow \R$   by setting  $\gamma(v)= \gamma(v|_k)$. Given   any variety $\X$, by an \textbf{$M_k$-function} on $\X$  we mean    a map $\lambda: \X \times M  \rightarrow \R$  such that  $\lambda(\x, v)$ is $M_k$-constant or  $\lambda(\x, v)=\infty$  for all $ \x \in \X$ and $v \in M$.  Two  $M_k$-functions $\lambda_1$ and $\lambda_2$  on $\X$  are called equivalent,   and denoted by $\lambda_1 \sim \lambda_2$,  if
there is an $M_k$-constant function $\gamma$ such that 
\[
| \lambda_1(\x, v)-\lambda_2(\x, v) |     \leq \gamma(v) \;  \text{for all } \;  (\x, v)\in \X \times M.
\]
We say that an $M_k$-function $\lambda$ is   \textbf{$M_k$-bounded}  if $\lambda\sim 0$.

For an affine variety $\X$, a set  $E\subset \X \times M$ is called   an \textbf{affine $M_k$-bounded set} if there  are coordinate function $x_1,\cdots, x_n$ on $\X$ and an  $M_k$-bounded constant function  $\gamma$ such that 
\[
|x_i(\x)|_v  \leq  e^{\gamma(v)} \it  \text{ for all }    0 \leq i \leq n,\ \text{and}  \  (\x, v)\in E.
\]
The set $E$ is bounded  by a finite set of absolute values and it  is integral  with respect to the rest of absolute values. This definition is independent of choice of the coordinates $x_i$ on $\X$.   By definition,  any finite union of affine $M$-bounded sets is again an affine $M$-bounded.

For an arbitrary variety $\X$, we say that  $E\subset \X \times M$ is a \textbf{$M_k$-bounded set} if there exists a  finite cover  $\{U_i\}$ of affine open subsets of $\X$ and $M_k$-bounded sets $E_i \subset U_i\times M$   such that $E=\bigcup E_i$.

A function $\lambda: \X \times M \rightarrow \R$ is called \textbf{locally $M_k$-bounded above} if for every $M_k$ bounded subset $E \subset \X \times M,$ there exists an $M_k$-constant  $\gamma$ such that $\lambda(\x, v ) \leq \gamma(v)$ holds for $(\x, v) \in E$.   The  \textbf{locally $M_k$-bounded below}  and \textbf{locally $M_k$-bounded } functions are  defined similarly.

Recall that a line bundle $\cL$ on a variety $\X$ defined over $k$, is a covering map $\pi : \cL \to \X$ such that for each  $\x \in \X$, the  fiber  $\cL_\x \colonequals\pi^{-1} (\x)$   is a 1-dimensional vector space over $k$. 
An \textbf{$M$-metric} on  a line bundle $\cL$ is a norm $\|\cdot \|    =\left( \|\cdot \|_v \right) $   
such that for  each $v \in M$,
and each fiber $\cL_\x$   assigns a   function 
\[
\|\cdot \|_v: \cL_\x \rightarrow \R_{\geq 0},
\]
which is  not identically   zero and  satisfies:
\begin{enumerate}[\upshape(i)]
	\item $\|   \lambda \cdot  \xi  \|_v = | \lambda |_v \cdot \|  \xi \|_v $     
	for $\lambda \in \kk$ and $\xi \in \cL_\x$.
	\item If $ v_1$, $v_2 \in M$ agree on 
	$k(\x)$, then $\|\cdot \|_{v_1}=\|\cdot \|_{v_2}$ on $\cL_\x (k(\x))$.
\end{enumerate}
%
%
An $M$-metric $\|\cdot \|=\left( \|\cdot \|_v \right)$ on $\cL$ is called  \textbf{locally $M$-bounded}  if for any regular function $g\in \cO_{\X}(U)$ on an open  set $U \subseteq \X$, the function  $(\x, v) \mapsto \log \|   g(\x) \|_{v}$  on $U\times M$ is locally $M_k$-bounded.

We say that   $\cL$ is  an \textbf{$M$-metrized line bundle} on $\X$ if $\cL$ is   equipped with an  $M$-metric.   The following result  shows that  there exist an $M$-metric on any line bundle on  a variety in projective spaces; see \cite[Prop.~ 2.7.5]{bombieri}.
\begin{lem}\label{metric}
	Any line bundle $\cL$ on a variety $\X \subseteq \P_\kk^n$ defined over $k$ admits a locally bounded 
	$M$-metric $ \| \cdot  \|$.
\end{lem}

Denote  by $\widehat{\cL}$  the pair  $(\cL , \| \cdot  \|)$.   Given two pairs  $\widehat{\cL_1}= (\cL_1, \| \cdot  \|_1)$ and  $\widehat{\cL_2}= (\cL_2, \| \cdot  \|_2)$,   we define $\widehat{\cL}_1  \otimes\widehat{\cL}_2 \colonequals \left(\cL_1\otimes \cL_2, \| \cdot  \| \right)$, where 
\[
\|f\otimes g  \| = \| f  \|_1 \cdot  \| g  \|_2, \; \text{for} \  f\in \cL_{1, \x},  \  g\in \cL_{ 2, \x}, \  \text{ and  } \x \in \X.
\]
We say that $\widehat{\cL_1 }$ and $\widehat{\cL_2}$ are \textbf{isometric} if there is an isomorphism between  $\cL_1$ and $\cL_2$ which is fiber-wise an isometry.

Let $\widehat{\Pic(\X)}$ denote the  group of the isometric classes of  pairs  $\widehat{\cL}= (\cL , \| \cdot  \|)$ where  $\cL \in \Pic(\X)$. Then, the identity element of $\widehat{\Pic(\X)}$ is  $\cO_\X $ with trivial metric  
$\| 1  \|_v=|1  |_v$
and 
$  \widehat{\cL}^{-1}=(\cL^{-1}, 1/\| \cdot  \|) $
is the inverse of $\widehat{\cL} \in \widehat{\Pic(\X)}$.
Given any morphisms  $\phi: \X' \rightarrow \X$ of varieties over $k$, and $\widehat{\cL}= (\cL , \| \cdot  \|) \in  \widehat{\Pic(\X)}$,   the \textbf{pull-back} of $\widehat{\cL}$ by $\phi$ is defined as     
$\widehat{\phi^*(\cL)}  :=  \left(  \phi^*(\cL), \left(\| \cdot  \|'_v \right)    \right)$, 
such that  for   $ \x \in \X'$, any open subset $U$ of $\X$ containing $\phi(\x)$,  and  $g \in \cO_\X(U)$ we have
\[
\|  \phi^*(g) (\x) \|'_v= \| g(\phi(\x))\|_v.
\]
The pull-back induces a group homomorphism  between  $\widehat{\Pic(\X)}$ and $\widehat{\Pic(\X')}$. Under this homomorphism, any  locally bounded $M$-metrized   line bundles remain  locally bounded.
%
\subsection{Local Weil  heights} \label{LWHVPS}
We assume that the reader is familiar with  Cartier  divisors for varieties  in  projective spaces.
Given any effective  Cartier divisor $D=\{ (U_i, f_i)\}$  on $\X$,   let  $\cL_D=\cO_\X(D)$ be  the line bundle of regular functions on $D$.  It can be constructed by gluing 
$\cO_\X(D)|_{U_i}= f_i^{-1} \cO_\X( U_i)   $
and the constant section $1$  becomes a canonical invertible regular section on $\cL_D$, which we denote it by $g_D$.   We equip  $\cL_D$  with a locally bounded $M$-metric $\| \cdot \|$,
which  is possible by  \cref{metric}, 
and   denote  it by 
$ \widehat{D}=\left( \cL_D, \| \cdot \|\right)$.
Given  $\nu \in M_k$,  the \textbf{local Weil height  $\lambda_{\widehat{D}}(\cdot,\nu)$} with respect to  $\widehat{D}$ on $\X$  is 
defined to be
\begin{equation}
	\label{lwh1}
	\lambda_{\widehat{D}}(\x, \nu)=- \log \| g_D(\x)\|_v, \ \text{for}\ \x\in \X \backslash  \Supp(D),
\end{equation} 
where   $v\in M$ such that $v|_k=\nu.$

The following lemma provides a summary of all properties of local heights, which can be found in \cite[Prop.~ 2.7.10 and 2.7.11]{bombieri}  or \cite[Chap.~10]{Lang1983b}.

\begin{lem}[Local Weil heights]  \label{LWHM}   %
	For each of $\nu \in M_k$,   let $v\in M$ such that $\nu=v|_k$. Let     $\X \subseteq\P_k^n$ be a variety  defined over $k$,   and    ${\widehat{D}}, {\widehat{D}}_1,  {\widehat{D}}_2\in \widehat{\Pic(\X)}$.
	Then,  we have:
	\begin{enumerate}[\upshape(i), leftmargin=0.8cm]
		\item  
		For    $\x \not \in \Supp(D_1) \cup  \Supp(D_2) $, we have 
		\[ \lambda_{\widehat{D_1+D_2}}(\x,\nu)=  \lambda_{\widehat{D}_1 }(\x,\nu) + \lambda_{\widehat{D}_2}(\x,\nu).\]
		
		\item  
		If $\phi: \X' \rightarrow \X$ is a morphism over $k$ such that $\phi(\X')\cap \Supp(D)=\emptyset$, then 
		\[
		\lambda_{\phi^*(\widehat{D})}(\x', \nu)=\lambda_{\widehat{D}}(\phi(\x'),\nu ),  \quad \text{ for    } \; \x' \in  \X' \backslash  \phi^{-1}(\Supp(D)).
		\]
		\item    
		If $D$ is effective and  $\X$ is $M_k$-bounded (e.g $\X$ is projective),  then there exists an $M_k$-constant function $\gamma$ such that $\lambda_{\widehat{D}}(\x, \nu) \geq \gamma(\nu)$,  for  $ \x \in  \X \backslash  \Supp(D)$.

		\item  If $D=\div(f) $ for some nonzero rational function on $\X$, then  
		\[
		\lambda_{\widehat{D}}(\x, \nu)=- \log \frac{|f(\x)|_v}{|\x|_v} , \ \text{for} \ \x\in  \X \backslash  \Supp(D), 
		\]
		by giving the trivial metric   $\|1 \|_v=|1|_v$ on $\cO_\X(D)\iso \cO_\X$. 
		
		\item If $\X$ is $M_k$-bounded, $\|\cdot \|' $ is another $M_k$-bounded metric  on $\cO_\X(D)$ and  $\lambda'_{\widehat{D}}$   is the resulting local Weil height, then 
		$\lambda_{\widehat{D}}  =\lambda'_{\widehat{D}} + O(1). $
		\item If $K|k$ is a finite field extension and $u\in M_K$ over some $\nu \in M_k$, then
		\[
		\lambda_{\widehat{D}}(\x,\nu) =\frac{1}{[K:k]} \lambda_{\widehat{D}}(\x, u), \ \text{for} \ \x \in  \X \backslash  \Supp(D).
		\]
		\item 
		There are    $m, n \in \Z^{\geq 0} $, and  nonzero rational functions  $f_{i,j} $ on $\X$ for $i=0, \cdots, n_1$,     $j=0, \cdots, n_2$ such that
		\[
		\lambda_{\widehat{D}}(\x,\nu) =\max_{0 \leq i\leq n_1}^{} \min_{0\leq j\leq n_2}^{}  \log  \left| f_{ij} (\x) \right|_\nu.
		\]
	\end{enumerate}
\end{lem}

\subsection{Global  Weil heights}   \label{GWHVPS}
Let $\X\subset\P_\kk^n$ be a variety  defined over $k$ and  $\cL$  any line bundle  on  $\X$. 
Consider the pair  $\widehat{\cL}= \left( \cL,   (\|\cdot \|_{v}  )\right)  \in \widehat{\Pic(\X)}$, 
a given $\x \in \X$,  and  $K$  a finite extension of $k$ containing $k(\x)$.   For each $u\in M_K$, we choose a place  $v\in M$  over $u$  and define  
\[
\|\cdot \|_u\colonequals \|\cdot \|_{v}^{1/[K:k]}
\]
on $\cL_{\x}(k(\x))$.
By the second  condition of a   $M$-metric, one can see that it is independent of the choice of $v\in M$. We let $g$ be an invertible rational function of $\cL$  with $\x \not \in \Supp(D_g)$ where $D_g=\div(g)$.  Note that such function exists because there is an open dense trivialization in a neighborhood of $\x$. Then,   $\cO_\X(D_g)$ is a  locally $M_K$-bounded  with respect to $M_K$-metric given above.   We denote  by 
${\widehat{\cL_g}} :=\left( \cO_\X(D_g), ( \|\cdot \|_{u} ) \right)$.
The \textbf{global Weil height} $h_{\widehat{\cL}}(\x)$ of $\x\in \X$ with respect to   $\widehat{\cL}$  is defined by
\begin{equation}\label{gwh}
	h_{\widehat{\cL}}(\x)\colonequals\sum_{u\in M_K}^{} \lambda_{\widehat{\cL_g}} (\x, u),
\end{equation}
where we have   $\lambda_{\widehat{\cL_g}} (\x, u) = -\log \| g(\x)\|_u$,  assuming $v|_k=u$.     These definitions are independent of the choice of    $K$ and   $g$.  For  the following see \cite[Prop.~ 2.7.18]{bombieri}.

\begin{lem}[Global Weil height machinery]\label{GWHM}
	Let $\X$ be a variety and   $\widehat{\cL}, \widehat{\cL}_1, $ and $\widehat{\cL}_2 \in \widehat{\Pic(\X)}$.
	Then:
	\begin{enumerate}[\upshape(i), leftmargin=0.8cm]
		\item  
		$h_{\widehat{\cL}} $ depends  only on the isometry class of  $\widehat{\cL}$, i.e, if $\widehat{\cL}_1 $and $ \widehat{\cL}_2 $ are  isometric pairs, then  $h_{\widehat{\cL}_1} =h_{\widehat{\cL}_2}.$
		\item 
		If $\X$ is a complete variety or generally $M$-bounded, then  $h_{\widehat{\cL}}$ does not depends on the choice of  the locally bounded $M$-metrics up to a locally $M$-bounded constant function.
		
		\item 
		For   any $\x \in \X$, we have 
		$h_{\widehat{\cL}_1 \otimes \widehat{\cL}_2  }(\x)=h_{\widehat{\cL}_1} (\x)+h_{\widehat{\cL}_2}(\x).$
		\item  
		If $\phi: \X' \rightarrow \X$ is a morphism   over $k$, then $h_{\phi^*(\widehat{\cL})}(\x)=h_{\widehat{\cL}}(\phi(\x))$, for $ \x \in \X'$.
		\item If $\X=\P_\kk^n$ and $\cL=\cO_\X(1)$, then    $h(\x) =h_{\widehat{\cL}}(\x) + O(1).$
	\end{enumerate}
\end{lem}

\section{Weighted projective varieties}\label{sect-2}
Let $k$ be a field   and   for any integer $n\geq 1$ denote by  $\A_k^n$  (resp. $\P_k^n$)  the affine  (resp. projective)  space  over $k$. 
When $k$ is an algebraically closed field, we will drop the subscript.
For any integer $\ell \geq 1$, let   $\mu_{\ell}$  denote the group of $\ell$-th roots of unity generated by $\xi_m$, which is assumed to be contained in $k$. 

A fixed tuple of positive integers   $\w=(q_0, \dots , q_n)$ is  called  \textbf{weights}.    
Let $\V_k^n \colonequals \A_k^n \setminus \{(0, \cdots, 0)\}$ and consider the action of   $k^\ast = k \setminus \{0\}$ on $\V_k^{n+1}$ given by
\begin{equation}\label{equivalence}
	\lambda \star (x_0, \dots , x_n) = \left( \lambda^{q_0} x_0, \dots , \lambda^{q_n} x_n   \right), \      \text{for }\ \lambda \in k^\ast.
\end{equation}
Define the \textbf{weighted projective space}     $\P^n_{\w, k}$  to be  the quotient space  $\V_k^{n+1}/k^\ast$ of this action, which is  a geometric quotient since $k^\ast$ is a reductive group.   An element    $\x \in \P_{\w, k}^n $ is denoted by $\x = [ x_0 :   \dots : x_n]$  and its $i$-th coordinate  by $x_i(\x)$.  For each $i=0, \ldots, n$, we define  affine pieces  of $\P_{\w, k}^n $  by 
\[U_i=\{ \x \in \P_{\w, k}^n  :  \ x_i(\x) \neq 0\}.  \]
Hence,   $\displaystyle \P_{\w, k}^n =\cup_{i=0}^n U_i$.
We assume that the field $k$ contains a $q_i$-th root of unity $\xi_{q_i}$  for every $i=0, \cdots, n$. Then, for each $i=0, \ldots, n$,  the affine piece  $U_i$  is isomorphic to $\V_k^n/\mu_{q_i}$, the quotient  space of the action of      $\mu_{q_i}$    on $\V_k^n$ with  coordinates $z_0, \cdots, \hat  z_i, \cdots, z_n$,  given by
\begin{equation}
	\xi_i \cdot (z_0, \cdots,\hat  z_i, \cdots, z_n)  \mapsto  (\xi_i^{q_0}  z_0, \cdots, \hat  z_i, \cdots, \xi_i^{q_n}  z_n).
\end{equation}
Here,  for all $0\leq j\neq i\leq n$,  we have   $z_j =\frac{x_j}{x_i^{q_j/q_i}}$, which is similar to the case of usual projective space  $\P_k^n$.  

Weighted projective space can also be defined as a finite quotient of usual projective space.   For weights  $\w=(q_0, \dots , q_n)$, we let    
$G_\w\colonequals \mu_{q_0} \times \cdots \times \mu_{q_n}$,  which is a finite group of order $|G_\w|=q$   with $q\colonequals \prod_{i=0}^{n} q_i $.  Then, there is an action of $G_\w$ on $\P_k^n$  given by 
\begin{equation}\label{equivalence2}
	(\xi_0,  \cdots, \xi_n) \bullet [x_0: \dots : x_n]   = \left[ \xi_0  x_0: \dots : \xi_n  x_n   \right].
\end{equation}
Note that $G_\w \iso \mu_m$ if and only if  $m = \lcm (q_0, \dots , q_n)$, that is,  all of $q_i$'s are pairwise coprime. In this case, action of $G_m$    on $\P_k^n$ can be expressed as  
\begin{equation}\label{equivalence3}
	\xi^\alpha \cdot  [x_0: \dots : x_n]   =
	\left[ \xi^{\alpha /q_0} x_0: \dots : \xi^{\alpha /q_n} x_n   \right],
\end{equation}
for $0 \leq \alpha \leq m-1$, where $\xi \in G_m$ is a  $m$-th root of unity.
The morphism $\pi_0: \V_k^{n+1} \longrightarrow \V_k^{n+1}$ given by    
\[
(x_0, \cdots, x_n) \mapsto \left( x_0^{q_0}, \dots , x_n^{q_n}   \right)
\]
induces  the following diagram
\begin{equation}  \label{diag1}
	\xymatrixcolsep{2pc}
	\xymatrix{ 
		\V_k^{n+1} \ar[rr]^{\pi_0} \ar[d]^{p_\w}  &  & \V_k^{n+1}  \ar[d]^{p_\w} \\
		\P_k^n \ar[rr]^{\pi_{\w}}   \ar[dr]^{p_{\w}}  &   &  \P_{\w, k}^n \\
		& \P_k^n/ G_\w  \ar[ur]_{\iso } & }
\end{equation}
where   $p_\w$ is the canonical quotient map and  $\pi_\w:  \P_k^n \longrightarrow   \P_{\w, k}^n$ is given by
\[
[x_0: \cdots: x_n] \mapsto \left[ x_0^{q_0}: \dots : x_n^{q_n}   \right].
\]
The morphism $\pi_{\w} $ is surjective, finite, and  its   fibers  are orbits of the action of $G_\w$ on $\P_k^n$, see 
\cite[Chap. V, Props. 1.3 and  1.8]{Raynaud1971}.


$\P_{\w, k}^n (k)$  will denote the set of $k$-rational points of $\P_{\w, k}^n$.   When $k$ is  algebraically closed and there is no room for confusion sometimes    $\P_\w^n$ is used instead of  $\P_{\w, k}^n$.

\subsection{Zariski topology on weighted projective spaces}

Consider the ring of polynomials $k[x_0, \ldots , x_n]$ and assign to every variable $x_i$ the weight  $\wt (x_i)=q_i$,
for all $i=0, \ldots , n$.   Every polynomial is a sum of monomials $x^d= \prod x_i^{d_i}$ with    $ \wt (x^d) = \sum d_i q_i$. 

Let $f  \in k[x_0, \dots, x_n]$,  where $\wt (x_i) = q_i$, for $i=0, \dots, n$.  Then,  $f$   is called a  \textbf{weighted homogeneous\footnote{{In some papers on weighted projective spaces,  a weighted homogeneous polynomial is also called \textbf{quasihomogeneous} polynomial.}} polynomial of degree $d$} if each monomial in $f$ is weighted of degree $d$, i.e. 
\[ f (x_0, \dots , x_n) = \sum_{i=1}^t a_i \prod_{j=0}^n x_j^{d_j}, \; \text{ for } \;  a_i \in k  \, \text{ and } \,  t \in \N \]
and for all $ 0 \leq i \leq n$, we have that $\sum_{i=1}^n q_i d_j = d$.  
For  every $\lambda \in k^\ast$ and any  weighted homogeneous polynomial $f$,   we have
\[  f(\lambda^{q_0} x_0, \lambda^{q_1} x_1, \dots , \lambda^{q_n} x_n) = \lambda^d f(x_0, \dots , x_n),\]
We denote by   $k_\w [x_0, \dots, x_n]$  the \textbf{set of weighted homogeneous polynomials} over $k$. 
%
It is a subring of $k[x_0, \dots, x_n] $  and therefore  a Noetherian ring.  By $k_\w[x_0, \dots, x_n]_d$ we mean the \textbf{additive group of all weighted homogeneous polynomials of degree $d$}.

Let  $\a=[\a_0 :  \cdots : \a_n] \in \P^n_{\w, k}$ and $ f \in k_\w[x_0, \dots, x_n]_d$.     Then, for any  $\lambda \in k^\ast$, we have   $\a =   [\lambda^{q_0} \a_0:  \cdots : \lambda^{q_n} \a_n]$.   Since
\[
f   \left(  \lambda^{q_0} \a_0, \dots, \lambda^{q_n} \a_n    \right) = \lambda^d \, f( \a_0, \dots, \a_n)  = 0,
\]
then $\a$ being a zero of $f$ is well-defined for all $\a\in \P_{\w, k}^n $. 

A \textbf{weighted hyperplane} in $\P_{\w, k}$  is a weighted homogeneous polynomial of degree $m$.  
Hence, it is the set of points $\x=[x_0: \ldots : x_n] \in \P_{\w, k}$ satisfying a polynomial of the form
\begin{equation}
	\label{whyper}
	\ell(\x)=a_0 x_0^{m/q_0}+ a_1 x_1^{m/q_1} + \cdots+ a_n x_n^{m/q_n} = \sum_{i=0}^n a_i x_i^{\frac m {q_i}}
\end{equation}
Notice that if $\w=(1, \ldots , 1)$ all definitions agree with those of   $\P^n$.

An ideal $I \subset k_{\w} [x_0, \ldots, x_n]$   is called a \textbf{weighted homogeneous ideal}  if every element of $f\in I$ can be written as $f = \sum_{i=0}^{d} f_i$ where $f_i \in k_\w[x_0, \dots, x_n]_i\cap I$ with  $\deg(f_i)=i.$    The \textbf{sum} of two weighted homogeneous ideals $I$ and $J$,  is denoted by $I+J$ and is defined to be 
\[ I+J = \{ f+g | f \in I, g \in J. \} \]
If $I$ and $J$ are weighted homogeneous ideals in $k_\w[x_0, \dots, x_n]$,  then $I +J$ is also an weighted homogeneous 
ideal in $k_\w[x_0, \dots, x_n]$.
The \textbf{product} of two weighted homogeneous ideals $I$ and $J$ is denoted by $IJ$ and  is defined to be the ideal  
\[ 
IJ = \langle   \{ fg \,  | \, f \in I, g \in J \} \rangle.
\]
For any given    weighted homogeneous ideal   $I$, we define \textbf{weighted projective variety of $I$} by
\begin{equation}\label{def-wpv}
	V (I)  = \left  \{ \x \in \P_{\w, k}^n \, \left |\, \frac{}{} \right.      f(\x) = 0 \, \, \text{ for all } \, \, f \in I \right \}
\end{equation}
Let $I$ and $J$ be weighted homogeneous ideals. Then the following hold:
\begin{enumerate}[\upshape(i), nolistsep] 
	\item  $V(I)\cap V(J)=V(I+J)$
	\item    $V(I)\cup V(J)=V(IJ)$
	\item   $\P_{\w, k}^n = V(0)$
\end{enumerate} 
%
Conversely,  given any $V \subset \P_{\w, k}^n $   the \textbf{weighted homogeneous ideal associated to $V$} is given by
\[
I(V) = \left\{ f \in k_\w [x_0, \dots, x_n] \, \left | \frac{}{} \right. \, f(\x) = 0 \, \, \text{ for all } \, \,  \x \in V   \right \} 
\]
A weighted homogeneous ideal $I$ is called a \textbf{radical weighted homogeneous ideal} if $f\in k_\w[x_0, \dots, x_n]$ such that $f^r\in I$ for an integer $r\geq 1$ then $f\in I$.

\begin{lem}
	Let $V \subset \P_{\w, k}^n$ be a weighted projective variety. Then, weighted homogeneous ideal $I(V)$ associated to $V$  is a radical weighted homogeneous ideal. 
\end{lem}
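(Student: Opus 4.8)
The plan is to unwind the definitions and reduce the whole statement to the fact that the field $k$, being an integral domain, is reduced. First I would record the routine facts that $I(V)$ is an ideal of $k_\w[x_0,\dots,x_n]$ — if $f,g$ vanish on $V$ and $h\in k_\w[x_0,\dots,x_n]$, then so do $f-g$ and $hf$ — and that it is weighted homogeneous in the sense defined above. For the latter, given $f\in I(V)$ one writes $f=\sum_{i=0}^{d} f_i$ with $f_i\in k_\w[x_0,\dots,x_n]_i$, fixes a point $\x\in V$ with a representative $(x_0,\dots,x_n)\in\V_{n+1}(k)$, and invokes \eqref{equivalence}: for every $\l\in k^\ast$ the tuple $\l\star(x_0,\dots,x_n)$ represents the same point $\x$, so that
\[
0 \;=\; f\big(\l\star(x_0,\dots,x_n)\big) \;=\; \sum_{i=0}^{d} \l^{\,i}\, f_i(x_0,\dots,x_n).
\]
Running $\l$ over enough elements of $k^\ast$ and inverting the resulting Vandermonde system gives $f_i(x_0,\dots,x_n)=0$ for all $i$, and since $\x$ was arbitrary each $f_i$ lies in $I(V)$. (Exactly as in the classical projective case, this is where one wants $k$ infinite; in general one first base changes to $\bar k$, which changes neither $V$ nor the weighted homogeneous decomposition.) This is what makes $I(V)$ a weighted homogeneous ideal and accounts for the wording of the statement.

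It then remains to check that $I(V)$ is radical. Suppose $f\in k_\w[x_0,\dots,x_n]$ and $f^m\in I(V)$ for some $m\ge 1$. Fix $\x\in V$ and a representative $x=(x_0,\dots,x_n)\in\V_{n+1}(k)$. From the polynomial identity $f^m(x)=\big(f(x)\big)^m$ and the hypothesis $f^m\in I(V)$ we obtain $\big(f(x)\big)^m=0$ in $k$; since $k$ is a field, and hence has no nonzero nilpotents, this forces $f(x)=0$. As $\x\in V$ was arbitrary, $f$ vanishes on all of $V$, i.e. $f\in I(V)$. Therefore $I(V)$ is a radical weighted homogeneous ideal.

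I do not expect a genuine obstacle here. The one point meriting care is the well-definedness of evaluating weighted homogeneous polynomials at points of $\wP^n_\w(k)$, but that was already settled before the lemma via the identity $f(\l^{q_0}x_0,\dots,\l^{q_n}x_n)=\l^d f(x_0,\dots,x_n)$; granting it, the radical property is a one-line consequence of $k$ being reduced. The longest piece of the write-up is the verification of weighted homogeneity (the Vandermonde argument together with the harmless reduction to an infinite base field), which is word for word the one used for ordinary projective varieties; the radical property itself is essentially immediate.
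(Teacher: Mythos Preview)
Your argument is correct, and for the radical property it is verbatim the paper's: both reduce $f^m\in I(V)$ to $(f(\x))^m=0$ in the integral domain $k$ (the paper phrases it as ``$k_\w[x_0,\dots,x_n]$ is an integral domain''), hence $f(\x)=0$.

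Where you diverge is in the weighted-homogeneity step. The paper does not run the Vandermonde argument; instead it invokes Noetherianity to write $I(V)=\langle f_1,\dots,f_n\rangle$ and then asserts that each generator, being an element of $k_\w[x_0,\dots,x_n]$, is already weighted homogeneous---relying on its convention that $k_\w[x_0,\dots,x_n]$ denotes the set of weighted homogeneous polynomials. Your approach instead shows directly that the graded pieces of any $f\in I(V)$ again lie in $I(V)$, via $f(\lambda\star x)=\sum_i \lambda^i f_i(x)=0$ and a Vandermonde inversion. This is the standard argument for graded rings and is more self-contained: it does not depend on the paper's notational shortcut (which, taken literally, is in tension with $k_\w[x_0,\dots,x_n]$ being a ring), and it makes explicit the role of the $k^\ast$-action. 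The cost is the mild caveat about $k$ being infinite, which you handle correctly.
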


\begin{proof} Let $f$ and $g$ be two polynomials in $I(V)$. Then, $f(P) = g(P) = 0$ for all points $P \in V$, i.e. they both vanish at all points $P$ in the variety $V$ then so does $f+g$ and $f h$ where $h$ is any polynomial in $I(V)$. Therefore, $I(V)$ is a weighted homogeneous ideal. 
	
	Since, $k_\w[x_0, \dots, x_n]$ is Noetherian,     then $I(V)$ is finitely generated, say  $I(V)= \langle f_1, \dots, f_n\rangle$.  However,  $f_i \in k_\w[x_0, \dots, x_n]$ for all $i$ and therefore every $f_i$ is weighted homogeneous polynomial. Hence $I(V)$ is weighted homogeneous ideal since it is generated by finitely many weighted homogeneous polynomials. 
	
	Finally let us prove that $I(V)$ is radical. Let $f^r \in I(V)$. Then, for all points $P \in V$ we have that $f^r (P) = 0$. But since $f \in k_\w [x_0, \dots, x_n]$, which is an integral domain, then $f^r (P) = \left(f(P) \right)^r = 0$ implies that $f(P) = 0$ for all $P\in V$. Therefore, $I(V)$ is radical.  This completes the proof. 
\end{proof} 

For weighted projective varieties $V $ and $ W$  then we say that $V$ is a \textbf{weighted  subvariety} of $W$ if $V \subset W$.
It can be shown that     any finite union of weighted   projective varieties is a weighted  projective variety.
Furthermore,    an arbitrary intersection of weighted projective varieties is a weighted projective variety.
A weighted projective variety  is said to be \textbf{irreducible}   if it has no non-trivial decomposition into subvarieties. 
We notice that any weighted projective varieties are projective varieties too.  Hence, we can define the  Zariski topology for weighted projective varieties.  \textbf{Zariski topology} on a weighted projective space $\P_{\w, k}^n$ is given by defining closed sets of $\P_{\w, k}^n$ to be those of the form $V(I)$ for some weighted homogeneous ideal $I \subset k_\w[x_0, \dots, x_n]$. 

\begin{defi}
	\textbf{Zariski closure} of a subset $S$  of a weighted projective space $ \P_{\w, k}^n$ is the smallest weighted projective variety that contains $S$.
\end{defi}

\begin{rem}
	Let $S\subset \P_{\w, k}^n $.   Then,  $V(I(S))$ is the Zariski closure of $S$.
	The proof is similar to the case of  projective varieties. 
\end{rem}

\begin{exa} 
	Let $\w=(q_0,  q_1, q_2)$ and   $f \in k_\w [x, y, z]_d$. Then,   $ V(f) \subset \P_{\w, k}^2 $ is a degree $d$-plane curve in $\P_{\w, k}^2$.
\end{exa}

The following  gives the third equivalent definition of weighted projective space   in language of schemes,   see  \cite[Subsection 1.2.2]{dolga} or \cite[Theorem 3A.1]{Beltrametti1986}.

\begin{prop}\label{Bprop1}
	$\P_{\w, k}^n$ is isomorphic to  $\Proj  \left(k_\w [x_0, \dots, x_n]\right)$.
\end{prop}

For the rest of this paper, by  a \textbf{weighted variety} we mean    an integral, separated subscheme of finite type in $\Proj(k_\w[x_0,\cdots, x_n])$. In other words,  $\X \subseteq \P_{\w, k}^n$ is a weighetd variety if there are 
$f_1, \cdots, f_t \in k_\w [x_0, \dots, x_n]$ such that $\X$ is isomorphic  to the $k$-scheme 
$\Proj \left( \frac{k_\w [x_0, \dots, x_n]}{\left\langle f_1, \cdots, f_t  \right\rangle }\right).$

A weighted space $\P_{\w, k}^n $ is called \textbf{reduced } if   $\gcd (q_0,  \cdots , q_n)   = 1$.   It is called  \textbf{normalized} or \textbf{well-formed} if   
\[ \gcd (q_0, \dots , \hat q_i, \dots , q_n)   = 1, \quad \text{for each } \;  i=0, \dots , n. \]

\subsection{Veronese map}
Let $R$ be a graded ring and $d \geq 1$ be an integer. Its \textbf{$d$-th truncated ring } is the subring $R^{[d]} \subseteq R$  defined by 
\[
R^{[d]} \colonequals\bigoplus_{d \vert n} R_n = \bigoplus_{i \geq 0} R_{d i}.
\]
Clearly we have  the embedding $R^{[d]}  \hookrightarrow  R$, which is called the \textbf{$d$-th Veronese embedding},
implying   that  $\Proj (R^{[d]})  \iso  \Proj (R)$ by  \cite[Prop.~ 2.4.7]{Grothendieck1961}.
Moreover, the sheaf $\cO(1)$ on $\Proj (R^{[d]})$ corresponds  via the isomorphism to   $\cO(d)$ on $\Proj (R)$.

\begin{prop} \label{reduction}
	Given  any  tuple of weights   $\w=(q_0, \dots, q_n)$, the following hold:
	\begin{enumerate}[\upshape(i)]
		\item    Any weighted projective space  $\P^n_{\w, k}$ is isomorphic to  $\P^n_{\w',k}$, where  $\w'$ is a reduced tuple of weights.
		\item   If  $\P^n_{\w, k}$  is reduced and  $d_i= \gcd (q_0, \cdots, \hat q_i, \cdots, q_n)$ for $0\leq i \leq n$, 
		then $\P^n_{\w, k} \iso  \P^n_{\w', k} $ with  $\w'=\left(\frac{q_0}{d_i}, \dots, \frac{q_{i-1}}{d_i}, q_i, \frac{q_{i+1}}{d_i}, \dots, \frac{q_n}{d_i}\right).$
		\item   Any $\P^n_{\w, k}$  is isomorphic to a reduced and  well-formed one.  
		\item If $\w$ is reduced and all of   $m/q_i$ are co-prime, where $  m= \lcm \left( q_0, \cdots ,  q_i\right),$  then $\P^n_{\w, k}$ is isomorphic to $\P_k^n$ by   $\phi_m : \P^n_{\w, k}  \longrightarrow  \P_k^n$ defined as 
		%
		\begin{equation}\label{veronese}
			\phi_m([x_0, \dots , x_n])  = [x_0^{m/q_0}, x_1^{m/q_1}, \dots , x_n^{m/q_n}].
		\end{equation}
	\end{enumerate}
\end{prop}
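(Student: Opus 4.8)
The strategy is to treat the four parts essentially as an application of the Veronese-embedding principle: an isomorphism of graded rings induces an isomorphism of their $\Proj$'s, and by \cref{Bprop1} the weighted projective space $\wP^n_\w(k)$ is precisely $\Proj(k_\w[x_0,\dots,x_n])$. So in each case I would exhibit an explicit identification of the relevant graded rings (or a truncation thereof) and invoke \cref{Bprop1}. I would prove (i) and (ii) first since they do the real work, then deduce (iii) by iterating, and finally handle (iv) as a direct computation.

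For (i), write $\d = \gcd(q_0,\dots,q_n)$ and set $\w' = (q_0/\d, \dots, q_n/\d)$. The key observation is that a monomial $x^m = \prod x_i^{m_i}$ has $\w$-degree $\sum m_i q_i = \d \cdot \sum m_i (q_i/\d)$, so its $\w$-degree is always divisible by $\d$, and $k_\w[x_0,\dots,x_n] = k_\w[x_0,\dots,x_n]^{[\d]}$ as graded rings once we rescale the grading by $\d$; more precisely, the underlying ring $k_\w[\mathbf x]$ coincides with $k_{\w'}[\mathbf x]$ as a ring, and the grading of the former is the $\d$-fold "stretch" of the latter. Since $\Proj$ is insensitive to replacing a grading by a positive integer multiple of itself (equivalently, to passing to a $\d$-th truncated ring, which is isomorphic to the original by \cite[Prop.~2.4.5]{Grothendieck1961} as quoted), we get $\wP^n_\w(k) = \Proj(k_\w[\mathbf x]) \iso \Proj(k_{\w'}[\mathbf x]) = \wP^n_{\w'}(k)$. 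For (ii), one argues similarly but one variable at a time: if $d_i = \gcd(q_0,\dots,\hat q_i,\dots,q_n)$ and $\wP^n_\w(k)$ is already reduced (so $\gcd(d_i, q_i) = 1$), then applying the $d_i$-th Veronese embedding only in the variables other than $x_i$ — which is legitimate because every monomial of $\w$-degree $d$ can, after multiplying through by a suitable power of $x_i$ to clear denominators, be matched against the grading of $\w'$ — identifies $\Proj(k_\w[\mathbf x])$ with $\Proj(k_{\w'}[\mathbf x])$ for $\w' = (q_0/d_i,\dots,q_{i-1}/d_i, q_i, q_{i+1}/d_i,\dots,q_n/d_i)$. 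The coprimality $\gcd(d_i,q_i)=1$ is what makes the divisions $q_j/d_i$ and the resulting tuple behave correctly.

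Part (iii) then follows by a finite iteration: first apply (i) to make the tuple reduced, then apply (ii) repeatedly, once for each index $i = 0,\dots,n$, to kill each $d_i$; one must check that applying (ii) at index $i$ does not destroy reducedness or resurrect a nontrivial common factor among the remaining coordinates at a previously-cleared index, which is a straightforward gcd bookkeeping argument (the gcd of a subset can only decrease or stay the same under these divisions). After all $n+1$ steps the tuple is well-formed by definition, and it remains reduced, giving (iii). For (iv), assuming $\w$ reduced with the $m/q_i$ pairwise coprime ($m = \lcm(q_0,\dots,q_n)$), I would verify directly that $\phi_m$ as in \eqref{veronese} is well-defined on $\wP^n_\w(k)$ — it respects the $\star$-action of \eqref{equivalence} since $(\l^{q_i}x_i)^{m/q_i} = \l^m x_i^{m/q_i}$, so the image point $[x_0^{m/q_0}:\dots:x_n^{m/q_n}]$ in $\P^n(k)$ is independent of the representative — and that it is a bijection with inverse given by appropriate roots, using the coprimality hypothesis to see uniqueness of the roots up to the scaling that is already quotiented out. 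At the level of coordinate rings, $\phi_m$ corresponds to the inclusion $k[y_0,\dots,y_n] \hookrightarrow k_\w[x_0,\dots,x_n]$, $y_i \mapsto x_i^{m/q_i}$, whose image is exactly the $m$-th truncated subring; the pairwise-coprimality of the $m/q_i$ forces this to be an isomorphism onto $k_\w[\mathbf x]^{[m]} \iso k_\w[\mathbf x]$, so $\phi_m$ is an isomorphism of schemes.

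\textbf{Main obstacle.} The routine parts are bookkeeping; the one place requiring genuine care is the ring-theoretic claim underlying (ii) and (iv) — namely that after dividing weights by $d_i$ (resp. passing through $y_i \mapsto x_i^{m/q_i}$) the resulting map of graded rings really is an isomorphism and not merely an injection onto a truncation. The coprimality hypotheses ($\gcd(d_i,q_i)=1$ in (ii), pairwise coprimality of the $m/q_i$ in (iv)) are exactly what is needed here, and the crux is to show that every weighted monomial lies in the image after the allowed manipulations; I expect this to be where the only non-formal argument is concentrated, and it is precisely the point at which one uses that two gradings differing by a global rescaling give the same $\Proj$ via the truncation isomorphism of \cite[Prop.~2.4.5]{Grothendieck1961}.
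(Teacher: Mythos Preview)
Your approach is essentially the same as the paper's: both reduce everything to the truncation isomorphism $\Proj R \iso \Proj R^{[d]}$ together with \cref{Bprop1}, and both deduce (iii) by iterating (ii) after applying (i). Your treatment of (i), (iii), and (iv) matches the paper's almost verbatim.

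The one place where your write-up is vaguer than the paper is the heart of (ii). You describe it as ``applying the $d_i$-th Veronese embedding only in the variables other than $x_i$'' and speak of ``multiplying through by a suitable power of $x_i$ to clear denominators''; neither phrase is quite right. The paper's argument is cleaner and is exactly the computation you flag as the main obstacle: take the full $d_i$-th truncation $R^{[d_i]}$. If a monomial $x_0^{p_0}\cdots x_n^{p_n}$ lies in $R^{[d_i]}$, then $\sum_j p_j q_j \equiv 0 \pmod{d_i}$; since $d_i \mid q_j$ for $j\neq i$, this forces $d_i \mid p_i q_i$, and then $\gcd(d_i,q_i)=1$ gives $d_i \mid p_i$. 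Hence $x_i$ occurs in $R^{[d_i]}$ only through $x_i^{d_i}$, so
\[
R^{[d_i]} = k[x_0,\dots,x_{i-1},x_i^{d_i},x_{i+1},\dots,x_n],
\]
which, after rescaling the grading by $1/d_i$, is the graded coordinate ring of $\wP^n_{\w'}(k)$ with $\w'=(q_0/d_i,\dots,q_{i-1}/d_i,q_i,q_{i+1}/d_i,\dots,q_n/d_i)$. That single divisibility line is the whole content of (ii); once you insert it, your proposal becomes a complete proof along the same lines as the paper's.
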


\proof  Let $d= \gcd (q_0, \dots , q_n)$, $R = k_\w [x_0, \dots, x_n]$, and     $R^{[d]}$ be the $d$-th truncated subring of $R$. Then, $R^{[d]}=k_\w[x_0^d, \dots, x_n^d]$ and by  \cref{Bprop1} we have 
\[
\P^n_{\w, k} = \Proj (R)\iso\Proj (R^{[d]})=\P^n_{ \w', k},\ 
\text{with}\  \w'=\left(\frac{q_0}{d}, \dots, \frac{q_n}{d} \right),
\]
under the isomorphism  
\begin{equation}\label{reduced}
	[x_0: \cdots: x_n] \rightarrow [y_0: \cdots : y_n]\colonequals[x_0^d : x_1^d: \cdots:    x_n^d].
\end{equation}
This shows that $\P_{\w, k}^n$ is isomorphic to a reduced  weighted projective space $\P^n_{\w', k}$, i.e., 
with $\w'=(q'_0, \cdots, q'_n)$ such that  $\gcd(q'_0, \cdots, q'_n)=1$.  This completes the proof of part (i).

Now, we assume that  $\gcd(q_0, \dots, q_n)=1$  and let     $d_i= \gcd (q_0, \cdots, \hat q_i, \cdots, q_n)$, for  $0\leq i \leq n$. Then,   $\gcd(d_i, q_j)=1$, for all $0 \leq i\neq j \leq n$. If $x_0^{p_0} \cdots x_n^{p_n}$ is a monomial of degree $p d_i$ for an integer $p\geq 1$,  then 
\[
p_0 q_0 +\cdots +p_n q_n = p d_i,
\]
and so $d_i $ divides $p_i q_i$,  and hence $d_i | p_i$.  This implies that $x_i$ only appears in  $ R^{[d_i]}$ as $x_i^{d_i}$.
Thus, we have   $R^{[d_i]} = k [x_0, \dots, x_{i-1}, x_i^{d_i}, x_{i+1}, \dots, x_n]$
and hence   
\begin{equation}\label{well-formed1}
	\P^n_{\w, k}  = \Proj (R)   \iso   \Proj (R^{[d_i]}) =  \P^n_{\w',k},
\end{equation}
with $\w'=\left(\frac{q_0}{d_i}, \dots, \frac{q_{i-1}}{d_i}, q_i, \frac{q_{i+1}}{d_i}, \dots, \frac{q_n}{d_i}\right)$ under the isomorphism 
\[ [x_0: \cdots: x_n] \rightarrow [y_0: \cdots : y_n]\colonequals[x_0 : \cdots : x_i^{d_i}  : \cdots:    x_n],
\]
see 
\cite[Prop. 3]{b-g-sh}  for  more details.     Thus,  the part (ii) is proved. 

One can  conclude  part (iii)  by  repeatedly   using  (ii).   Indeed,   by  defining 
\[
d_i= \gcd (q_0, \cdots, \hat q_i, \cdots, q_n), \; 
a_i  = \lcm (d_0, \cdots, \hat d_i, \cdots, d_n),  \;
a =\lcm(d_0, \cdots, d_n), 
\]
for all $0 \leq i \leq n$, one can easily check the following:
\begin{enumerate}[\upshape(1)]
	\item   $a_i \vert q_i$,  \  $\gcd (a_i, d_i)=1$  and  $ a_i d_i=a$   for  $  0 \leq i \leq n;$ 
	\item   $\gcd (d_j, d_i)=1$, and   $d_j \vert q_i$, for  $   0\leq i \neq j \leq n.$
\end{enumerate}
Then, denoting  by   $R^{[\textbf{d}]} \colonequals k_\w [x_0^{d_0}, \cdots, x_n^{d_n}]$,  we have 
\[ 
\P^n_{\w,k }  = \Proj (R)   \iso   \Proj (R^{[\textbf{d}]}) =  \P^n _{\w',k}\ \text{with}\ \w'=(q'_0, \dots , q'_n).
\]
where $q'_i=q_i/a_i$ for all $0 \leq i \leq n$, under the morphism 
\begin{equation}\label{well-formed}
	[x_0: \cdots: x_n] \rightarrow [y_0: \cdots : y_n]\colonequals [x_0^{d_0}: \cdots: x_n^{d_n}].
\end{equation}
Since     $\gcd (q'_0, \cdots, \hat {q'}_i, \cdots, q'_n)=1$ for all $0 \leq i \leq n$, then  $\P^n_{\w', k}$ is a well-formed   weighted projective space;
see     \cite[Prop.~ 2.3]{MR3285300} 
for more details.  This completes the proof of   part (iii).

If  $a_i =q_i$   for all  $0 \leq i \leq n$ in the above discussion,   then  $\P^n_{\w, k }    \iso \P_k^n$. 
This   holds if $m/q_i$ are all co-primes, where $m=\lcm (q_0, \cdots, q_n)$    
The isomorphism  is given by \cref{veronese}.
\qed

We  call  the isomorphism  $\phi_m$ given in \cref{veronese}   the \textbf{Veronese map}.

\begin{exa}[The space $\M_2$] \label{exam1}
	Consider the weighted projective moduli space  of genus $2$ curves,  say $\P_{\w, k}^3$ for $\w=(2,4,6,10)$.  
	
	Let $d_0:= \gcd (4,6,10)=2$, $d_1 = \gcd(2, 6, 10)= 2$, $d_2= \gcd(2, 4, 10)=2$, $d_3:=\gcd(2, 4, 6)=2$ and 
	$a_0 =\lcm (2,2,2)=2 = a_1=a_2=a_3$, and $a=\lcm(2,2,2,2)=2$.   The new set of weights is $q_i^\prime = \frac {q_i} {a_i}$. Hence 
	$ \w^\prime = (1, 2, 3, 5)$. 
	Thus,  the morphism   $\P_{(2,4,6,10), k}^3          \to \P_{(1,2,3,5), k}^3 $, given by 
	\begin{equation}
		[x_0: x_1:x_2:x_3]     \to [y_0: y_1 :y_2:y_3]=\left[  x_0^2 : x_1^2 : x_2^2: x_3^2        \right]
	\end{equation}
	is an isomorphism,  from \cref{well-formed}.
	%
	Then $q=2\cdot 3 \cdot 5= 30$ and the  Veronese embedding is 
	\[\left[  J_2: J_4: J_6: J_{10} \right] \longrightarrow \left[  J_2^{30}: J_4^{15}: J_6^{10}: J_{10}^6 \right].\]
	Since $J_{10}$ is the discriminant then $J_{10}    \neq 0$, then 
	\[
	\left[  J_2^{30}: J_4^{15}: J_6^{10}: J_{10}^6 \right] =  \left[  \frac {J_2^{30}}   {J_{10}^6}:    \frac { J_4^{15}}  {J_{10}^6} :  \frac {J_6^{10}} {J_{10}^6}: 1    \right]
	\] 
	Thus, two  genus curves  are isomorphic if and only if they have the same 
	$i_1:=\frac {J_2^{30}}   {J_{10}^6}$,  $i_2:=\frac { J_4^{15}}  {J_{10}^6}$,  and $i_3:=  \frac {J_6^{10}} {J_{10}^6}$ 
	invariants.  Such invariants $i_1$, $i_2$, $i_3$ are $\gl_2 (k)$-invariants and sometimes are called \emph{absolute invariants}.  To avoid invariants with such high degrees sometimes different invariants have been used,  where $i_1 = \frac {J_4} {J_2^2}$, $i_2=\frac {J_2J_4-J_6}  {J_2^3}$, and $i_3=\frac {J_{10}} {J_2^5}$, but then we have to define new invariants for the locus $J_2=0$; see \cite{M2}, and many other authors. 
\end{exa}

Example above shows the benefits of weighted projective spaces  from a computational point of view, since it is much easier to compute with $\left[  J_2: J_4: J_6: J_{10} \right] $ because the coordinates have much smaller degrees instead of $\left[  J_2^{30}: J_4^{15}: J_6^{10}: J_{10}^6 \right]$.
It was exactly this  fact and computational efforts in \cite{M2}   which  led to the definition  of the weighted general common divisors and weighted heights  in \cite{mandili-sh} and  \cite{b-g-sh}; as we will see in detail in \cref{sect-4}.
$\M_2$ is a very nice example of doing explicit computations, however GIT  guarantees that the theory works in every genus. 

\subsection{Singular locus of  weighted projective varieties}
Singularities of   $\P_{\w, k}$ are classified    in the following proposition,    see    \cite{dolga} or \cite{Beltrametti1986} for its proof.

\begin{prop}\label{Bprop2}
	$\P_{\w, k}^n$ is    an irreducible,  normal and Cohen-Macaulay  variety having  only    cyclic quotients singularities.    
	Moreover,  if  $\P_{\w, k}^n $ is non-singular, then it is isomorphic to $\P_k^n$.
\end{prop}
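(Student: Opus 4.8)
The plan is to establish the three assertions of \cref{Bprop2} separately, since they have rather different flavors: irreducibility and normality are essentially algebraic facts about the graded ring $R = k_\w[x_0,\dots,x_n]$ and its Veronese subrings, the Cohen--Macaulay property follows from the fact that $R$ is a polynomial ring, and the statement about cyclic quotient singularities together with the smoothness criterion is local and uses the affine-chart description $U_i \cong \V_n(k)/\mu_{q_i}$ already recorded in \cref{sect-2}.

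First I would reduce to the well-formed case. By \cref{reduction}(iii) we may assume $\w$ is reduced and well-formed without changing the isomorphism type of $\wP_\w^n(k)$, so all the assertions need only be checked in that setting. For irreducibility, one notes that $R = k_\w[x_0,\dots,x_n]$ is a (polynomial, hence) integral domain, so $\Proj(R)$ is integral, in particular irreducible; combined with \cref{Bprop1} this gives irreducibility of $\wP_\w^n(k)$. For normality, since $\Proj$ is covered by the affine opens $\Spec(R_{(x_i)})$ where $R_{(x_i)}$ is the degree-zero part of the localization $R_{x_i}$, it suffices to check each $R_{(x_i)}$ is integrally closed; but $R_{(x_i)}$ is (by the chart computation in \cref{sect-2}) the ring of $\mu_{q_i}$-invariants in a localized polynomial ring, and a ring of invariants of a finite group acting on a normal domain is normal — alternatively one invokes that $R$ itself is normal and truncation/$\Proj$ preserves this. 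The Cohen--Macaulay assertion is the same kind of argument: $R$ is Cohen--Macaulay (being a polynomial ring), and one quotes the standard fact that $\Proj$ of a Cohen--Macaulay graded ring, or each $R_{(x_i)}$ as a direct summand of a Cohen--Macaulay ring via the Reynolds operator / trace map of the $\mu_{q_i}$-action, is again Cohen--Macaulay. These three steps are routine given the references \cite{dolga}, \cite{Beltrametti1986}.

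Next I would address the singularities. Using the affine cover $U_i \cong \V_n(k)/\mu_{q_i} = \A^n(k)/\mu_{q_i}$ from \cref{sect-2}, each chart is visibly a quotient of affine space by a finite cyclic group acting linearly (with weights $q_0,\dots,\widehat{q_i},\dots,q_n$ modulo $q_i$), so $\wP_\w^n(k)$ has only cyclic quotient singularities by definition; the singular locus of $U_i$ is contained in the image of the fixed-point locus of the nontrivial elements of $\mu_{q_i}$. For the final claim, suppose $\wP_\w^n(k)$ is non-singular; I would argue that well-formedness plus smoothness forces every $q_i = 1$. Concretely, if some $q_i > 1$, then (using well-formedness, which guarantees the action of $\mu_{q_i}$ on the chart $U_i$ is not a pseudo-reflection group, i.e. the generic stabilizers in codimension one are trivial) the quotient singularity at the image of the origin in $U_i = \A^n(k)/\mu_{q_i}$ is genuinely singular, contradicting smoothness; hence all $q_i = 1$ and $\wP_\w^n(k) = \P^n(k)$. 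For a weight that is not well-formed, reduce first via \cref{reduction}.

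The main obstacle I expect is the last step — proving that a non-singular well-formed weighted projective space must be ordinary projective space. The subtlety is that $\A^n/\mu_q$ can fail to be singular even when $q > 1$ if the action contains pseudo-reflections (complex reflections), which is precisely why one must invoke well-formedness: the well-formedness hypothesis $\gcd(q_0,\dots,\widehat{q_i},\dots,q_n) = 1$ is exactly what rules out the $\mu_{q_i}$-action on $U_i$ having a hyperplane of fixed points, so that the Shephard--Todd--Chevalley theorem forces any nontrivial $\mu_{q_i}$ to produce an actual singularity. Making this clean requires either citing the relevant statement from \cite{dolga} or \cite{Beltrametti1986} or spelling out the reflection-group argument; given the expository aims of the paper, I would simply cite it, as the excerpt already does for \cref{Bprop2} as a whole.
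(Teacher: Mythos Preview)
Your sketch is mathematically sound and follows the standard route (domain $\Rightarrow$ irreducible; invariants of a normal ring $\Rightarrow$ normal; polynomial ring CM plus Hochster--Eagon/Reynolds $\Rightarrow$ CM; affine charts $\A^n/\mu_{q_i}$ $\Rightarrow$ cyclic quotient singularities; well-formedness plus Shephard--Todd--Chevalley $\Rightarrow$ smoothness forces all $q_i=1$). There is nothing wrong with it.

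However, the paper itself does \emph{not} prove \cref{Bprop2}: it simply refers the reader to \cite{dolga} and \cite{Beltrametti1986} for the proof and states the result. So your proposal is strictly more than what the paper does --- you are reconstructing the argument that the cited references contain, whereas the paper is content to quote the result. You even anticipated this yourself in your final paragraph. If the goal is to match the paper, a one-line citation suffices; if the goal is a self-contained exposition, your outline is the right one and only the Shephard--Todd--Chevalley step for the last assertion would benefit from an explicit reference.
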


We let  $d=\gcd (q_0, \ldots , q_n)$  
and  denote by  $\sing(\P_{\w, k}^n )$ the \textbf{singular locus} of $\P_{\w, k}^n$.  Then, following the proof of \cite[Prop. 7]{Dic-Dim},  one can show that
\[
\sing(\P_{\w, k}^n ) = \left\lbrace  \x \in  \P_{\w, k}^n  : \ \gcd_{ i \in J(\x) } \left( q_i \right) > d \right\rbrace
\]
For $\x\in \P_{\w, k}^n$ denote by  $J(\x):= \{ j  \ :\ x_j        (\x) \neq 0\}), $
the set of indexes where $\x$ has non-zero coordinates. 
Let $m=\lcm (q_0, \cdots, q_n)$, $p$ a prime dividing $m$, and 
\[
S_\w  (p) = \left\lbrace \x \in  \P_{\w, k}^n  : \     d p\mid q_i \ \text{for all} \ i \in J(\x)  \right\rbrace.
\]
The singular locus decomposes into irreducible components as  
\[
\sing(\P_{\w, k}^n ) =\bigcup_{\text{primes} \ p  | m, \ }^{} S_\w(p),
\]
where   only the maximal sets are considered in the union. 
The proof can be easily extended from that of   \cite{Dimca1986} see remark below. 

\begin{rem}In most papers the weighted projective space is assumed well formed. This is not really a restriction since every weighted projective space is isomorphic to a well-formed space. Then 
	\begin{equation}\label{s-w-p}
		S_\w  (p) = \left\lbrace \x \in  \P_{\w, k}^n  : \    p\mid q_i \ \text{for all} \ i \in J(\x)  \right\rbrace.
	\end{equation}
	and the singular locus is 
	\[
	\sing(\P_{\w, k}^n ) = \left\lbrace  \x \in  \P_{\w, k}^n  : \ \gcd_{ i \in J(\x) } \left( q_i \right) > 1 \right\rbrace
	\]
	see \cite[Prop. 7]{Dic-Dim}.     Since $\P_{\w, k}^n$ is well-formed then    $\x \in \sing(\P_{\w, k}^n )$ implies that  $x_i(\x)=0$ for at least one index   $i \in \{0, \ldots , n\}$.
\end{rem}

\begin{exa}[$\M_2$ again] 
	Let us consider again \cref{exam1}.
	
	Consider $\P_\w^3$ for $\w=(2,4,6,10)$.   Then $m=\lcm (2,4,6,10)=60$.  The only primes  dividing  $m=60 $ are $p=2, 3, 5$.  Then 
	\[
	\begin{split}
		S_{\w} (2)   & = \{ [0: t : 0:0] \in \P_\w^3   \}, \\
		S_{\w}  (3)  & = \{ [0: 0 : t :0] \in \P_\w^3   \}, \\
		S_{\w}  (5)  & = \{ [0: 0 : 0: t] \in \P_\w^3   \}
	\end{split}
	\]
	Hence,     $\sing  \P_{\w, \Q}^3  = S_\w (2)  \cup S_\w (3)  \cup S_\w (5)$.    
	
	One can take $\w^\prime=(1,2,3,5)$  and $\P_{\w^\prime, \Q}^3 $. Then $m=\lcm (1,2,3,5)=30$.  Only primes $p=2, 3, 5$  divide $m$.
	Then,
	\[
	\begin{split}
		S_{\w\prime} (2)  &= \{ [0: t : 0:0] \in \P_\w^3   \}, \\
		S_{\w\prime}  (3)  &= \{ [0: 0 : t :0] \in \P_\w^3   \}, \\
		S_{\w\prime}  (5)  &= \{ [0: 0 : 0: t] \in \P_\w^3   \}.
	\end{split}
	\]
	Hence,     $\sing  \P_{\w^\prime, \Q}^3  = S_\w^\prime (2)  \cup S_\w^\prime (3)  \cup S_\w^\prime (5)$.          
	\qed
\end{exa}

For a fixed prime $p$ such that $p \nmid m$, then $S_\w (p)=\emptyset$.  If $p\mid  m$ then denote 
\[ J (p) = \{  j \, \vert \, \text{ such that } \,  p \mid q_j \}, \quad  \text{ and } \quad n_p=\# J(p).
\]
Then    $S_\w(p) \neq \emptyset$    is isomorphic to the  weighted projective space
$\P_{\w^\prime, k}^{n_p}$, where $\w^\prime=(q_{i_1}, \cdots, q_{i_{n_p}})$ with $ i_\ell \in J(p)$ for $1 \leq \ell \leq n_p.$ 
Moreover, as a consequence of the normality of $\P_{\w, k}^n $, we have
$ \text{Codim}_{\P_{\w, k}^n } (\sing(\P_{\w, k}^n )) \geq 2$. 
This means that $\P_{\w, k}^n $ is regular in codimension one. In particular, if $ q_i$'s  are mutually coprime   and $q_i>1$, then  
\[\sing(\P_{\w, k}^n )=\{\x_i=[0: \cdots: 1: \cdots: 0] : \  0\leq i \leq n \}.\]

Next we consider the  canonical  quotient map  $p_\w:  \V_k^{n+1}\rightarrow \P_{\w, k}^n $,  which induces the surjective morphism 
$\pi_\w: \P_k ^n  \rightarrow \P_{\w, k}^n$.
Let $\X$ be a weighted subvariety  of  $\P_{\w, k}^n $.  
The  \textbf{punctured affine cone}  over $\X$ is  $\cC^*_\X=p_\w^{-1} (\X)$.  The \textbf{affine cone}   $\cC_\X$ over $\X$ is the closure of $\cC^*_\X$ in $\A_k^{n+1} $.  
The origin point $\textbf{0}=(0, \cdots, 0 )$ refers to the vertex of  $\cC^*_\X$.
We note that $k^\ast$ acts on the  punctured affine cone $\cC^*_\X=p_\w^{-1} (\X)$  to result  $\X= \cC^*_\X / k^\ast$.  Moreover,  $\cC^*_\X$ has no isolated singularities.

A  weighted subvariety   $\X$  of  $\P_{\w, k}^n $ is called \textbf{quasi-smooth}  of dimension $m$   if its affine cone  $\cC_\X$ is smooth   variety of dimension $m+1$ outside its vertex.     The singularities of a quasi-smooth variety $\X$  are due to the $k^\ast$-action  and hence are cyclic quotients singularities.
Furthermore, by \cite[Cor. 5.9]{Beltrametti1986}, if $\X \subset \P^n_{\w, k}$ is subvariety such that
$\X \cap \sing (\P^n_{\w, k})= \emptyset$, then  $\X$ is non-singular if and only if $\X$ is quasi-smooth.

A  weighted subvariety   $\X$  of  $\P_{\w, k}^n $ of codimension $c$  is called \textbf{well-formed} if  $\P_{\w, k}^n $ itself is well-formed and  $\X$ contains no codimension $c+1$ singular stratum  of  $\P_{\w, k}^n $. Hence, any codimension $1$ stratum of  a well-formed variety $\X $ is either nonsingular on  $\P_{\w, k}^n $ or  it is equal to $\X \cap \Y$, where $\Y$ is a codimension $1$ stratum of  $\P_{\w, k}^n $. This means that $\text{Codim}_{\X }(\X \cap \P_{\w, k}^n ) \geq 2$.

Given   a weighted polynomial $f \in k_\w[x_0, \cdots, x_n]$ of degree $d$, let $\X_d$  denotes the hypersurfaces  defined by $f$. It is called a \textbf{linear cone} if  $d= q_i$ for some $0 \leq i \leq n$, i.e, it is defined by $x_i+g$ with  $g \in k$. A linear cone is well-formed if and only if it is isomorphic to  $\P_{(q_0, \cdots, \hat q_i, \cdots,  q_n), k}^{n-1}$. In the case of hypersurfaces, $\X_d$ is well-formed if and only if the following hold:

\begin{itemize}[\upshape(i)]
	\item   $\gcd(q_0, \cdots, \hat q_i, \cdots, q_n)=1$ for all  $0 \leq i  \leq n$;
	
	\item $\gcd(q_0, \cdots, \hat q_i, \cdots,   \hat q_j, \cdots, q_n) $ divides $d$ for  $0 \leq i\neq j \leq n$.
\end{itemize}

For more    on well formed   subvarieties  of  $\P_{\w, k}^n $ of codimension $\geq 2$, see \cite{IanoFletcher2000}.

\subsection{Analytic structure of weighted projective spaces}\label{analytic}
As regular projective spaces, the weighted complex projective spaces can also be equipped with an analytic structure. 
We  consider the decomposition of 
\[ \P_{\w, \C}^n  = U_0 \cup \ldots \cup U_n,\]
where   
\[
U_i=\{ \x  \in \P_{\w, \C}^n  :  \ x_i(\x) \neq 0\} \subset \P_{\w, \C}^n,
\]
for each  $0 \leq i \leq n$.   Then,  the map  $\tilde \psi_i : \C^n   \to U_i$,
\begin{equation}
	\begin{split}
		(x_0, \ldots , x_{i-1}, x_{i+1}  \ldots , x_n) & \to [x_0: \ldots : x_{i-1}: 1 :  x_{i+1} :  \ldots : x_n]_\w
	\end{split}
\end{equation}
is a surjective analytic map, but not a chart since it is not injective.  However, it induces the isomorphism  $\psi_i : \X  (q_i : q_0, \ldots , \widehat q_i, \ldots q_n )    \to U_i $, such as 
\[ 
[(x_0, \ldots , x_{i-1},   x_{i+1} ,  \ldots , x_n)]  \to [x_0: \ldots : x_{i-1}: 1 :  x_{i+1} :  \ldots : x_n]_\w,
\]
where $\X(q_i : q_0, \ldots , \widehat q_i, \ldots q_n )$ is the cyclic quotient space       of the action of      $\mu_{q_i}$ on $\C^n$ given by   
$\mu_{q_i} \times \C^n    \to \C^n$ such as 
\begin{small}
	\begin{equation}
		(\xi_i,  (x_0, \ldots, x_{i-1}, x_{i+1},  \ldots, x_n))   \to 
		(\xi_i^{q_0} x_0, \ldots ,  \xi_i^{q_{i-1}} x_{i-1},  \xi^{q_{i+1}}  x_{i+1} ,  \ldots , \xi_i^{q_{n}} x_n),
	\end{equation}
\end{small}
where $\xi_i \in \mu_{q_i}$.    Since the changes of charts are analytic, then $\P_{\w, \C}^n $ is an analytic space with cyclic quotient singularities;  
see \cite{MR3285300, ArtalBartolo2014} for details.
\subsection{Weighted Blow-ups}
Consider 
$
\widehat \C^{n+1}_{\w} \colonequals \left\{  (\x, [\u]_\w) \in \C^{n+1} \times \P_{\w, \C}^n    \, | \, \x \in \overline{[\u]}_\w       \right\},
$
where $\overline{[\u]}_\w$  denote the Zariski closure of $[\u]_\w$ and $\x \in \overline{[\u]}_\w$ means that there exists
$t \in \C$ satisfying  $x_i=t^{q_i}\cdot u_i$ for each $0\leq i \leq n.$
The natural projection map 
\begin{equation}\label{w-blowups}
	\pi_{\w} :   \widehat \C^{n+1}_{\w} \to \C^{n+1}
\end{equation}
is an isomorphism over  $\widehat \C^{n+1}_{\w} \setminus \pi^{-1}_{\w} (\mathbf{0})$ and the 
\textbf{exceptional divisor} 
$
E\colonequals \pi^{-1}_{\w} (\mathbf{0} )
$
is identified with $\P_{\w, \C}^n $. 
The space $\widehat \C^{n+1}_{\w} = \widehat {U}_0 \cup \ldots \cup \widehat {U}_n$  can be covered with  $(n+1)$ charts, where 
\[
\widehat {U}_i = \{ (\x, [\u]_\w) \in \C^{n+1} \times \P_{\w, \C}^n :  u_i \neq 0 \} \subset \widehat \C^{n+1} (\w).
\] 
However,   $\phi^i : \C^{n+1}  \to \widehat {U}_i $, 
\[
\x  \to \left( x_0^{q_0}, x_0^{q_1} x_1, \ldots , x_0^{q_n} x_n), [x_1 : \ldots , x_{i-1} : 1 : x_{i+1}: \ldots : x_n] \right), 
\]
are surjective, but not injective.  Indeed, we have that $\phi^i (\x) = \phi^i (\y)$  is and only if there exists $\xi\in \mu_{q_i}$ such that $y_i = \xi^{-1} x_i$ and $y_j = \xi^{q_j} x_j$ for $j\neq i$.  Hence, the map $\phi^i$ induces an isomorphism $\X (q_i : q_0, \ldots , q_{i-1}, -1, q_{i+1}, \ldots , q_n) \to  \widehat {U}_i$.

These charts are compatible with the ones of   $\P_{\w, \C}^n $.  In $ \widehat {U}_i$ the exceptional divisor is $\{ x_i =0 \}$ and the $i$-th chart of $\P_{\w, \C}^n $ is the quotient space 
\[
\X (q_i : q_0, \ldots , q_{i-1}, -1, q_{i+1}, \ldots , q_n).
\]
%

\begin{exa}[Case  $n=2$]
	Let  $\w = (q_0, q_1, q_2)$ be a  tuple of reduced weights, i.e., $\gcd (q_0, q_1, q_2) =1$ and 
	$
	\pi_\w : {\widehat \C}^3_{\w} \to \C^3,
	$
	be the weighted blow-up at the origin with respect to $\w$.    Then 
	$
	{\widehat \C}^3\iso  \widehat {U}_0 \cup  \widehat {U}_1 \cup  \widehat {U}_2
	$, 
	where
	\[
	\widehat {U}_0 \iso X(q_0:-1, q_1, q_2), \   \widehat {U}_1\iso X(q_1: q_0, -1, q_2), \  \widehat {U}_2\iso X(q_2: q_0, q_1, -1),
	\]
	and the charts are  given by 
	\[
	\begin{split}
		\psi^0:       \;     X(q_0: -1, q_1, q_2) & \rightarrow U_0, \qquad 
		[(x_0:x_1:x_2)]  \mapsto  \left( (x_0^{q_0}, x_0^{q_1} x_1, x_0^{q_2} x_2), [1: x_1: x_2]\right) \\
		\psi^1:    \;     X(q_1: q_0, -1,  q_2) & \rightarrow U_1, \qquad 
		[(x_0:x_1:x_2)]  \mapsto   \left( (x_1^{q_1} x_0, x_1^{q_1}, x_1^{q_2} x_2), [x_0: 1: x_2]\right) \\
		\psi^2:      \;   X(q_1: q_0, q_1, -1) & \rightarrow U_2, \qquad
		[(x_0:x_1:x_2)]  \mapsto   \left( (x_2^{q_2} x_0, x_2^{q_2} x_1, x_2^{q_2} ), [x_0: x_1: 1]   \right). \\
	\end{split}
	\]
	The exceptional divisor $\pi_\w^{-1}((0,0,0))$ is isomorphic to $\P_{\w, \C}^2$, which can be simplified by isomorphism   $\P_{\w, \C}^2 \iso \P_{\w', \C}^2$  given by 
	\[
	[x_0:x_1:x_2]\mapsto \left[ x_0^{\gcd(q_1,q_2)} : x_1^{\gcd(q_0,q_2)}:x_2^{\gcd(q_0,q_1)}  \right],
	\]
	where 
	\[
	\w'=\left( \frac{q_0}{\gcd(q_0, q_1) \cdot \gcd(q_0, q_2)}, \frac{q_1}{\gcd(q_0, q_1) \cdot \gcd(q_1, q_2)}, \frac{q_2}{\gcd(q_0, q_2) \cdot \gcd(q_1, q_2)}\right).  
	\]
\end{exa}

\section{Weighted heights}\label{sect-4}
In  \cite{b-g-sh} a height function was defined for weighted projective spaces $\P_{\w, k}^n $,  called  weighted height.  We briefly describe  basic definitions here.  To avoid confusion with projective heights we will use different notation than that of \cite{b-g-sh}.
We will follow the parallelism with   Weil heights by using $\wh$, $\lwh$ instead of  $H$, $h$.   $\P_{\w}^n  (k)$  denotes the set of $k$-rational points of $\P_{\w, k}^n$. 

\subsection{Weighted heights on   $\P_{\w, k}^n $}\label{WWHWPS}
Given  any $\x  \in \P_\w^n (k)$, the \textbf{multiplicative weighted  height} over $k$  is defined as  
\begin{equation}\label{def:height}
	\wh_k ( \x ) \colonequals \prod_{\nu \in M_k} \max   \left\{   |x_0|_\nu^{\frac{1}{q_0}} , \dots, |x_n|_\nu^{\frac{1}{q_n}}  \right\}
\end{equation}
and its  \textbf{logarithmic  weighted  height} (over $k$) as 
\begin{equation}\label{log-height}
	\lwh_k(\x) \colonequals \log \wh_k (\x)=    \sum_{\nu \in M_k}   \max_{0 \leq j \leq n}\left\{\frac{1}{q_j} \cdot  \log  |x_j|_\nu \right\}.
\end{equation}

In \cite[Prop.~ 1]{b-g-sh} it is shown that height functions $\wh_k ( \x )$ and hence $\lwh_k(\x)$ are independent of the choice of   coordinates of the point $\x$. Moreover, in   \cite[Prop.~ 5-ii]{b-g-sh}, it is proved that for any finite extension $K|k$ we have 
\[
\wh_k ( \x )^{[K:k]}=\wh_K ( \x ), \; \text{and hence} \;  {[K:k]} \cdot \lwh_k (\x) = \lwh_K (\x).
\]
Weighted    heights   can be interpreted  in terms of  Weil height on projective varieties using Veronese map defined by  \cref{veronese}.  
Assume  that   $\w=(q_0, \cdots , q_n)$   is reduced, well-formed and satisfies    $\gcd(m/q_0, \cdots , m/q_n)=1$,   where $m = \lcm (q_0, q_1 \cdots,  q_n)$.
Proof of the following can be found in \cite{b-g-sh}.
\begin{lem}
	Weighted height $\wh_k $   is given in terms of   projective height $H_k$   via 
	\begin{equation}\label{lwh1a} 
		\wh_k( \x )  = H_k  \left( {\phi_m} (\x) \right)^{\frac{1}{m}}   \; \text{ and } \; 
		\lwh_k(\x)  = \frac{1}{m} \cdot h_k \left( {\phi_m} (\x) \right),\\
	\end{equation}
	for all  $\x \in \P_{\w}^n (k)$,   where  $\phi_m$ is the Veronese map given in \cref{veronese}.
\end{lem}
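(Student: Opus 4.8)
The plan is to compute both sides directly from the definitions and observe that the maximum defining the weighted height matches, term by term over $M_k$, the maximum defining the projective height of the Veronese image, after raising to the $m$-th power. First I would fix $\x = [x_0 : \cdots : x_n] \in \wP_\w^n(k)$ and recall that $\phi_m(\x) = [x_0^{m/q_0} : \cdots : x_n^{m/q_n}] \in \P^n(k)$, which is a well-defined point precisely because $\w$ is reduced, well-formed, and satisfies $\gcd(m/q_0, \dots, m/q_n) = 1$ — this is exactly the content of \cref{reduction}(iv), so $\phi_m$ is the isomorphism $\wP_\w^n(k) \to \P^n(k)$ and in particular $\phi_m(\x) \ne \mathbf 0$ whenever $\x \ne \mathbf 0$.

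The key computation is the following chain of equalities for each place $\nu \in M_k$:
\[
\max_{0 \leq j \leq n} \left\{ |x_j|_\nu^{1/q_j} \right\}
= \max_{0 \leq j \leq n} \left\{ \left( |x_j|_\nu^{m/q_j} \right)^{1/m} \right\}
= \left( \max_{0 \leq j \leq n} \left\{ |x_j^{m/q_j}|_\nu \right\} \right)^{1/m},
\]
where the first equality raises to the power $m$ and takes the $m$-th root (legitimate since $m \geq 1$ and all quantities are nonnegative), the second uses that $x \mapsto x^{1/m}$ is monotone increasing on $\R_{\geq 0}$ so it commutes with $\max$, and the last rewrites $|x_j|_\nu^{m/q_j} = |x_j^{m/q_j}|_\nu$ using multiplicativity of the absolute value. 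Taking the product over all $\nu \in M_k$ of this identity gives
\[
\wh_k(\x) = \prod_{\nu \in M_k} \max_{0 \leq j \leq n} \left\{ |x_j|_\nu^{1/q_j} \right\}
= \prod_{\nu \in M_k} \left( \max_{0 \leq j \leq n} |x_j^{m/q_j}|_\nu \right)^{1/m}
= \left( \prod_{\nu \in M_k} \max_{0 \leq j \leq n} |x_j^{m/q_j}|_\nu \right)^{1/m}
= H_k(\phi_m(\x))^{1/m},
\]
the third equality being just $\prod (\cdot)^{1/m} = (\prod \cdot)^{1/m}$, valid once one knows the product is finite — which it is, since it is the height of a point of $\P^n(k)$. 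The logarithmic statement follows by applying $\log$: $\lwh_k(\x) = \log \wh_k(\x) = \frac{1}{m} \log H_k(\phi_m(\x)) = \frac{1}{m} h_k(\phi_m(\x))$.

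I do not expect a serious obstacle here; the statement is essentially a bookkeeping identity. The only point requiring a little care is ensuring $\phi_m(\x)$ is a legitimate projective point with well-defined height — i.e. that not all coordinates $x_j^{m/q_j}$ vanish simultaneously and that the resulting height is independent of the choice of homogeneous coordinates for $\x$; both are guaranteed because $\phi_m$ is the isomorphism of \cref{reduction}(iv) and because $H_k$ on $\P^n(k)$ is coordinate-independent. Additionally one should note that the independence of $\wh_k$ from the choice of coordinates of $\x$ was already established in \cite[Prop.~1]{b-g-sh}, so the two sides are genuinely well-defined functions of $\x \in \wP_\w^n(k)$ and the displayed identities make sense.
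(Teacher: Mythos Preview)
Your proof is correct; it is the natural direct computation from the definitions, and there is nothing to object to. Note that the paper itself does not give a proof of this lemma but simply refers the reader to \cite{b-g-sh}; your argument is exactly the expected verification and almost certainly what that reference contains.
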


The \textbf{absolute weighted height} on $\P_\w^n(\kk)$ is defined as
\begin{equation}\label{wgh}
	\begin{split}
		\wh  : \P_\w^n(\kk)  &    \to [0, \infty],  \\
		\x    &   \mapsto    \wh (\x) \colonequals   \wh_K(\x)^{1/[K:k]},
	\end{split}
\end{equation}
and the \textbf{absolute logarithmic weighted height}   on $\P_\w^n(\kk)$ is given by
\begin{equation}\label{wgh1}
	\begin{split}
		\lwh  : \P_\w^n(\kk)    &   \to [0, \infty ],   \\
		\x   \mapsto      &  \lwh (\x)\colonequals \frac{1}{[K:k]} \log   \wh_K(\x),
	\end{split}
\end{equation}
for which $K\subset \kk$ is a finite extension of $k$ containing   $k(\x),$ 
the
\textbf{field of   definition }   of    $\x$  defined by
\[
k(\x)\colonequals k \left( \frac{x_0^{1/q_0}}{x_i^{1/q_i}} , \cdots, 1, \cdots,     
\frac{x_n^{1/q_n }}{x_i^{1/q_i }} \right), 
\]
for some $x_i\neq 0$.
$  $
Notice that both of these height functions are independent of the choice of the field $K$; see \cite{b-g-sh}. For simplicity, we call   $\lwh (\x)$  the  \textbf{global  weighted height} on $\P_\w^n (\kk)$.

By   \cref{lwh1a},  for a field $K\subset \kk$ containing and $k(\x)$, we have:
\begin{lem}\label{lwh1b} 
	For all $\x \in  \P_\w^n(\kk) $, we have
	\begin{equation}
		\wh ( \x )   = H  \left( {\phi_m} (\x) \right)^{\frac{1}{m}}, \; \text{ and } \;     \lwh(\x)  = \frac{1}{m} \cdot h \left( {\phi_m} (\x) \right),
	\end{equation}
	where   $\phi_m$ is as in \cref{veronese},  $H (\cdot)$,  $h (\cdot)$ as in \cref{gh}, and $\wh (\cdot)$, $\lwh (\cdot)$ as in \cref{wgh}.
\end{lem}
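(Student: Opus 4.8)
The plan is to derive \cref{lwh1b} directly from \cref{lwh1a} by passing from the field $k$ to a suitable finite extension $K$, using only the two normalization facts already recorded: the behaviour of weighted heights under field extension, namely $\wh_k(\x)^{[K:k]} = \wh_K(\x)$, and the analogous behaviour of projective heights, $H_k(\y)^{[K:k]} = H_K(\y)$. So the proof is essentially a bookkeeping argument comparing two ``absolute'' constructions that are both built by extending scalars and then taking an $[K:k]$-th root.

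First I would fix $\x \in \wP_\w^n(k)$ and let $K \subset \kk$ be a number field containing both $k(\wgcd(\tx))$ — equivalently $k(\hwgcd(\x))$ — and $k(\x)$, so that by definition \cref{wgh} we have $\wh(\x) = \wh_K(\x)^{1/[K:k]}$ and $\lwh(\x) = \frac{1}{[K:k]}\log \wh_K(\x)$. Since the hypotheses on $\w$ (reduced, well-formed, $\gcd(m/q_0,\dots,m/q_n)=1$) are inherited when we base-change to $K$, \cref{lwh1a} applies over $K$ and gives $\wh_K(\x) = H_K(\phi_m(\x))^{1/m}$ and $\lwh_K(\x) = \frac{1}{m} h_K(\phi_m(\x))$. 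Next I would observe that $\phi_m(\x) \in \P^n(k) \subset \P^n(K)$ as well, so that the field of definition of the projective point $\phi_m(\x)$ is contained in $K$; hence by \cref{gh} the absolute projective height satisfies $H(\phi_m(\x)) = H_K(\phi_m(\x))^{1/[K:k]}$ and $h(\phi_m(\x)) = \frac{1}{[K:k]} h_K(\phi_m(\x))$. (One should double-check that $K$ is a legitimate choice of auxiliary field for $\phi_m(\x)$ in the sense of \cref{gh}, i.e. that it contains $k(\phi_m(\x))$; this follows because $\phi_m(\x)$ has all coordinates in $k$.)

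Combining these identities is then a one-line substitution: $\wh(\x) = \wh_K(\x)^{1/[K:k]} = \left(H_K(\phi_m(\x))^{1/m}\right)^{1/[K:k]} = \left(H_K(\phi_m(\x))^{1/[K:k]}\right)^{1/m} = H(\phi_m(\x))^{1/m}$, and taking logarithms gives $\lwh(\x) = \frac{1}{m} h(\phi_m(\x))$. The independence of $\wh$, $\lwh$ from the choice of $K$ has already been recorded (it is cited from \cite{b-g-sh} right after \cref{wgh1}), and the independence of $H$, $h$ from the auxiliary field is part of \cref{gh}(v)/\cref{gh}, so the statement is well-posed.

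The only genuine subtlety — and what I expect to be the main point requiring care rather than the main \emph{obstacle} — is verifying that a single field $K$ can simultaneously serve all three roles: as the field of normalization/definition needed in \cref{wgh}, as a field over which the hypotheses of \cref{lwh1a} still hold, and as an admissible auxiliary field for the absolute projective height of $\phi_m(\x)$. The first is built into the definition; the second is immediate because reducedness and well-formedness of $\w$ are purely combinatorial conditions on the weights, unaffected by enlarging the ground field; and the third holds because $\x$, and therefore $\phi_m(\x)$, already has coordinates in $k \subseteq K$, so $k(\phi_m(\x)) \subseteq K$ automatically. Once these three compatibilities are noted, the proof is the short computation above.
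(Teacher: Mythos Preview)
Your proposal is correct and matches the paper's approach exactly: the paper's entire argument is the single sentence ``By \cref{lwh1a}, for a field $K\subset \kk$ containing $k(\wgcd (\tx))$ and $k(\x)$, we have:'', and you have simply spelled out the bookkeeping behind that sentence. Your extra care in checking that the same $K$ serves all three roles is a helpful elaboration, but there is no difference in strategy.
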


\subsection{Cartier  and Weil divisors on weighted varieties}
Let $\X$  be a weighted  variety in $\P^n_{\w, k}$ over the field $k.$ 
The group of \textbf{Weil divisors} on $\X$  is a free Abelian group generated by weighted closed  subvarieties of codimension one on $\X.$  This group is denoted by  $\WeDiv_\w   (\X)$. 
The \textbf{support} of the divisor  $D=\sum_Y n_\Y \cdot \Y$  is the union of all codimension one weighted subvarieties $\Y$  such that $n_\Y \neq 0$, which is denoted by $\Supp   (D)$.
A divisor is said to be \textbf{effective} if every $n_\Y \geq 0$ for all codimension one subvarieties $\Y \subset \X$.    
We define $\ord_\Y: \cO_{\X,\Y} \setminus \{0\} \rightarrow \Z$  to be 
$$\ord_\Y(f)=\text{length}_{\cO_{\X,\Y}} \left(\frac{\cO_{\X,\Y}}{ \left\langle f\right\rangle } \right), $$
which is well defined since $\cO_{\X, \Y}$ is a local ring. 
Then, one can  extend $\ord_\Y$ to the fraction field $k_\w(\X)^*$ in the usual way.
%
The order function $\ord_\Y: k_\w(\X)^* \rightarrow \Z$ has the following properties:
\begin{enumerate}
	\item  $\ord_\Y(f\cdot g)=\ord_\Y (f)+\ord_\Y (g)$
	\item  For a fixed $f \in k_\w (\X)^*$ there are only finitely many $\Y$ such that $\ord_\Y \neq 0$.
	\item Let $f\in k_\w(\X)^*$. Then,  $f \in \cO_{\X,\Y}$ if and only if $\ord_\Y (f)\geq 0$.
	Similarly, $f \in \cO_{\X,\Y}^*$ if and only if $\ord_\Y (f)=0$.
	\item If $\X$ is weighted projective  variety and $f \in k_\w(\X)^*$, then  $f \in k^*$ if and only if  $\ord_\Y (f) \geq 0$ for all $\Y$; if and only if  $\ord_\Y (f)=0$ for all $\Y$.
\end{enumerate}
The divisor of any   $f \in k_\w(\X)^*$ is defined as 
\[ \div (f)=\sum_{\Y \subset \X } \ord_\Y (f)\cdot \Y \]
which is called a \textbf{principal divisor}.
Two divisors $D$ and $D'$ are said to be \textbf{linearly equivalent} if their difference is a principal divisor.
The  divisor of zeros and  divisor of poles  of $f$,  denoted by $(f)_0$ and $(f)_\infty$   respectively,   are
\[ (f)_0=\sum_{\ord_\Y(f)>0} \ord_\Y (f) \cdot \Y,  \ (f)_\infty=-\sum_{\ord_\Y <0} \ord_\Y (f) \cdot \Y\]
The \textbf{divisor class group} of $\X$ is the group of divisor classes modulo linear equivalence. This group is denoted by $\Cl_\w   (\X)$,  and  $\Cl ( \P^n_{\w,k})$  for $\X =  \P^n_{\w,k}$.

A \textbf{Cartier divisor} on a weighted variety $\X$ is an equivalence class of collection of pairs $(U_i, f_i)_{i\in I}$   satisfying the following conditions:

\begin{enumerate}[\upshape(i)]
	\item The $U_i$ are affine weighted open  sets that cover $\X$.
	
	\item The $f_i$ are non zero rational functions, $f_i \in k_\w(U_i)^*=k_\w(\X)^*$.
	
	\item $\frac{f_i}{f_j} \in \cO_\X{(U_i\cap U_j)}^*$, so $\frac{f_i}{f_j}$ has no poles or zeros on $U_i\cap U_j$.
	
\end{enumerate}

\noindent Two Cartier divisors $\{(U_i, f_i)| i \in I\}$ and $\{(V_j, g_j)| j \in J\}$ are equivalent if  for all $i \in I$ and $j \in J$ we have 
\[
\frac{f_i}{g_j}\in \cO_\X(U_i\cap V_j)^*.
\]
The \textbf{sum of two Cartier divisors} is
\[\{(U_i, f_i)| i \in I\}+\{V_j, g_j)| j \in J\}=\{(U_i\cap V_j , f_i \, g_j)| (i,j) \in I\times J \}.\]  
The Cartier divisors with this operation on a weighted variety $\X$ form a group that we denote it by $\CaDiv_\w (\X)$.  The \textbf{support} of a Cartier divisor is the set of zeros and poles of  $f_i$, which is denoted by $\Supp(D)$.  A Cartier divisor is said to be \textbf{effective} or \textbf{positive} if it can be defined by a collection $\{(U_i, f_i)| i \in I\}$  such that every $f_i \in \cO_\X(U_i).$  
For a given  $f\in  k_\w(\X)^*$, the divisor  $\div(f)=\{(\X,f)\}$ is called a \textbf{principal Cartier divisor}.  Two Cartier divisors  are \textbf{linearly equivalent}  if their difference is a principal divisor.  The group of Cartier divisors classes modulo linear equivalence is called \textbf{Picard group} of a weighted variety $\X$ and is denoted by $\Pic_\w (\X)$. 
In the case $\X= \P_{\w, k}^n $, we   write  $\Pic(\P_{\w ,k}^n)$. 
A Cartier divisor $D$ on a weighted variety $\X$ is said to be \textbf{ample} or \textbf{big} if the corresponding line bundle $\cO(D)$ is ample or big, respectively.

For  $\X= \P_{\w, k}^n $ with reduced weights $\w$, 	 in \cite[Sections 5, 6]{AlAmrani1989},
it is proved  that  the following maps
\begin{align}
	\begin{split}
		\Z \rightarrow \Cl(\X), \\
		1 \mapsto \cO_\X (1),  \\
	\end{split}
	&
	\begin{split}
		\Z \rightarrow \Pic(\X), \\
		1 \mapsto \cO_\X (m),  \\
	\end{split}
	& 
	\begin{split}
		\\
		m=\lcm(q_0, \cdots, q_n),
	\end{split}
\end{align}
induce  the following isomorphism $\Cl(\X) \iso \Z, $ and $ \Pic(\X) \iso \Z, $  respectively.
Furthermore,  $\cO_\X (a)$ is not necessarily an invertible sheaf for any given integer $a\in \Z$. 
However,   by \cite[Lem. 1.3]{Mori1975}, the sheaf $\cO_{\X} (m)$  with $ m=\lcm (q_0, \cdots, q_n)$ 
is  ample and invertible, and for $a, b \in \Z$ we have 
\[
\cO_{\X} (a) \otimes \cO_{\X} (m)^{\otimes b} \iso \cO_{\X} (a+b m).
\]
In \cite[Thm. 4B. 7]{Beltrametti1986}, it is proved that  $ \cO_{\P_{\w, k}^n} (m)$ 
is ample  and there is $c \in \Z$  such that  $ \cO_{\P_{\w, k}^n} (c m)$ is very ample. 
Furthermore, the sheaf  $ \cO_{\P_{\w, k}^n} (a)$ is   coherent  and Cohen-Macaulay for any  $a \in \Z$.
If  $ \cO_{\P_{\w, k}^n} (a) \neq 0, $ then it is reflexive of rank $1$ by  \cite[Cor. 5.8]{Beltrametti1986}.

Following \cite{Mori1975}, we   define the \emph{weak projective space} over any field $k$ as follows:
\begin{defi}\label{wps}
	The complement of  $\sing(\P_{\w, k}^n )$  in $\X =\P_{\w, k}^n $ is called the  \textbf{weak projective space} over $k$, which  is a smooth weighted subvariety, denoted by 
	\begin{equation}
		\wP_{\w, k}^n :=\P_{\w, k}^n  \setminus \sing(\P_{\w, k}^n ).
	\end{equation}
\end{defi}

By  \cite[Prop. 1.1]{Mori1975}, the sheaf $\cO_\X (1)$ is locally free on $\wP_{\w, k}^n $.
Hence, defining  
\[
\cO_{\wP_{\w, k}^n} (1)\colonequals \cO_{\P_{\w, k}^n} (1)|_{\wP_{\w, k}^n },
\]
one can see that $\wP_{\w, k}^n$   is the largest open set $U \subset\P_{\w, k}^n$  such that  $\cO_{\P_{\w, k}^n} (1)|_U$ is an invertible sheaf on $U$ and 
\[
\left( \cO_{\P_{\w, k}^n} (1)|_U\right) ^{\otimes a} \iso \cO_{\P_{\w, k}^n} (a)|_U
\]
for any $a\in \Z$ by \cite[Thm. 1.7]{Mori1975}.
Furthermore, we have  $\Pic_\w (\wP_{\w, k}^n) \iso \Z$ and it is  generated by $\cO_{\wP_{\w, k}^n} (1)$.

For any (weighted)  projective variety $\X$ of dimension $\dim(\X)=d$ over $k$, we denote by $\Omega_\X^i$  the sheaf of   $i$-th regular differential forms on $\X$, and  $\omega_\X =\Omega_\X^d$ the \textbf{canonical  sheaf} of $\X$.
By \cite[Prop. 2.3]{Mori1975}, the canonical sheaf of $\wP_{\w, k}^n$ is
\[
\omega_{\wP_{\w, k}^n}\iso \cO_{\wP_{\w, k}^n} (-  \tilde{q}),
\]  
where $\tilde{q} = q_0 + q_1+\cdots+ q_n$,  by \cite[Prop. 2.3]{Mori1975}.

We also denote by $\omega_\X^0$ the  \textbf{dualizing sheaf} of $\X$.
If  $\X$ is a  nonsingular or more generally  normal (weighted) projective variety, then $\omega_\X^0=\omega_\X$. 
Otherwise, we let  $\Wc=\X \backslash \sing(\X)$ and consider the canonical embedding $j: \Wc \rightarrow \X$. 
Then,   if $\text{Codim}_\X (\X-\Wc ) \geq 2,$ then 
\[
\omega_\X^0 =j_*\ \omega_\Wc^0 =j_*\ \omega_\Wc.
\]
In the case $\X=\P^n_{\w, k}$,     since it is normal and Cohen-Macaualy and $\Wc = \wP_{\w, k}^n $, so by \cite[Cor. 6B.8]{Beltrametti1986} one has $\omega_{\P^n_{\w, k}}^0 \iso \cO_{\P^n_{\w, k}}(-\tilde{q}).$

\subsection{Local weighted heights}\label{LWH}
We assume that $\X$ is a weighted variety defined  over $k$ in   $\P^n_{\w ,\kk}$,  where $k \subset  \kk $ and $\w=(q_0,\cdots, q_n)$.
If $\X$ is a  weighted affine variety with coordinates $x_0, x_1, \cdots, x_n$, then a set  $E\subset \X \times M$ is called   a \textbf{weighted  affine $M_k$-bounded set }  if there  is an  $M_k$-bounded constant function  $\gamma$ such that 
\[\displaystyle 
|x_i(\x)|_v^{\frac{m}{q_i}}   \leq  e^{\gamma (v)}, \  0 \leq i \leq n \  \text{and} \  (\x, v) \in E.
\]
We note that this definition is independent of choice of the coordinates $x_i$'s on $\X$.  Moreover,  any finite union of  weighted  affine $M$-bounded sets is again a   weighted  affine $M$-bounded.

For an arbitrary variety $\X$, we say that  $E\subset \X \times M$ is a \textbf{weighted  $M_k$-bounded set} if there exists a finite cover  $U_i's$ of weighted  affine open subsets of $\X$ and the weighted   $M_k$-bounded sets $E_i \subset U_i\times M$  such that $E=\bigcup E_i$.
A function 
\[
\lambda: \X \times M \rightarrow \R
\]
is called a \textbf{locally weighted  $M_k$-bounded above} if for every weighted  $M_k$ bounded subset $E \subset \X \times M,$ there exists an $M_k$-constant  $\gamma$ such that  $\lambda (\x, v ) \geq \gamma(v)$ holds for $(\x, v) \in E$.  The  \textbf{locally weighted  $M_k$-bounded below}  and \textbf{locally weighted  $M_k$-bounded } functions are  defined similarly.

\begin{exa}
	For example,  let $\X=\P_{\w, \kk}^n$ and consider the finite cover of 
	affine open sets $\{ (U_i, x_i )\}  $ and $\gamma \equiv 0$.
	Moreover, for $0\leq i \leq n, $ the following sets are weighted $M_k$-bounded:
	\begin{equation}\label{En}
		\widetilde{E}_i=\left\lbrace  (\x,  v) \in \X \times M:  \text{and}\  
		\left| \frac{x_0^{\frac{m}{q_0}}}{x_i^{\frac{m}{q_i}}} \right|_{v}  \leq 1, \cdots, 
		\left| \frac{x_n^{\frac{m}{q_n}}}{x_i^{\frac{m}{q_i}}} \right|_{v}  \leq 1 \right\rbrace .
	\end{equation}
	Thus  $ \X=\P_{\w, \kk}^n$ is a weighted $M_k$-bounded set, since it is covered by $\widetilde{E}_i'$s. 
\end{exa}

Let $\cL$ be a line bundle on a weighted variety $\X$ defined over $k$.  A  \textbf{weighted $M$-metric}  on  $\cL$ is a norm $\|\cdot \|=\left( \|\cdot \|_v \right) $  such that for  each $v \in M$,  extending $v|_k \in M_k,$ and each fiber $\cL_\x$ with $\x \in \X$ assigns a   function $\|\cdot \|_v: \cL_\x \rightarrow \R_{\geq 0}$, not identically equal to zero, satisfying the following:
\begin{itemize}
	\item $\|\lambda \cdot  \xi\|_v = |\lambda|_v  \cdot \|  \xi \|_v$     for $\lambda \in \kk$ and $\xi \in \cL_\x$.
	\item If $ w_1, w_2 \in M$ agree on the residue field  $k(\x)$, then $\|\cdot \|_{w_1}=\|\cdot \|_{w_2}$ on $\cL_\x(k(\x))$.
\end{itemize}

A weighted  $M$-metric on $\cL$ is called  \textbf{locally  weighted  $M$-bounded} if for  section $g\in \cO_\X(U)$ on an open
set $U \subseteq \X$, the function 
\[
(\x, v) \mapsto \log \|g(\x) \|_{v}
\]
on $U\times M$ is locally weighted   $M_k$-bounded.
We say that   $\cL$ is  a \textbf{weighted $M$-metrized line bundle} on $\X$ if $\cL$ is  
equipped with a  weighted $M$-metric  $\|\cdot \|=\left( \|\cdot \|_v \right).$ 

Next  we  show that  there exist a locally bounded weighted $M$-metric on any line bundle on the weighted variety $\X$.
\begin{prop}\label{Wmetric}
	Any line bundle $\cL$ on a weighted variety $\X \subseteq \P_{\w, \kk}^n$ defined over $k$ admits a locally bounded weighted  $M$-metric.\footnote{We thank Min Ru for clarifying  some details in the proof of \cref{Wmetric} by indicating    \cite[B2.2.10 and B2.2.11]{Ru2021a}.}
\end{prop}
\proof 
First we assume that   $\X=\P_{\w, \kk}^n$ and $\cL=\cO_\X(m)$, where $m=\lcm (q_0, q_1, \cdots, q_n)$.
Then, one can define an $M$-metric by  letting
\begin{equation}\label{eq2c}
	\displaystyle\|\ell(\x) \|_v =\frac{|\ell(\x)|_v}{ \max_{i}^{} |x_i|_v^{\frac{m}{q_i}}},
\end{equation}
for each $v\in M$, $\x \in \X$ and    a global section $\ell\in \cO_\X(m)$   given by
$$\ell= a_0 x_0^{\frac{m}{q_0 }} + a_1 x_1^{\frac{m}{q_1 }}+ \cdots+ a_1 x_1^{\frac{m}{q_1 }}.$$   
It is well-defined  on  $\cL$, and  on the set $U_i=\{x_i \neq 0\}$  we have  
$$\left\|x_i^{\frac{m}{q_i}}(\x) \right\|_v =\frac{\left| x_i^{\frac{m}{q_i}}(\x)\right|_v}{ \max_{i}^{} |x_i|_v^{\frac{m}{q_i}}} \leq 1.$$ 
Moreover,  the functions $\left| \frac{x_j^{m/q_j}}{x_i^{m/q_i}}\right|_v$
are bounded by an $M_k$-constant  on the bounded sets $\widetilde{E}_i$ defined by \cref{En}.
Thus,  $\log \left\|  x_i^{\frac{m}{q_i}}(\x)\right\|_v$ are bounded below for all indexes, and hence  \cref{eq2c}  gives the  desired locally bounded weighted  $M$-metric.  

Next, we assume that  $\X \subseteq \P_{\w, \kk}^n$ is a weighted projective variety  
and $\cL=\cO_\X(D)$, where $D$ is an effective Cartier divisor on $\X$ both defined over $k$.
In this case,  
$\cL$ can be written as     $ \cL=\M_1\otimes \M_2^{-1}, $ where $\M_1$ and $\M_2$ are base point free line bundles on $\X$. Now, we choose  generating  global functions $s_1, \cdots, s_{n_1}$ of $\M_1$, and  $t_1, \cdots, t_{n_2}$ of $\M_2$.  
Then, for $\x \not \in  \text{Supp}(D) $, the desired   locally bounded weighted $M$-metric on $\cL$  is given by
\begin{equation}\label{eq2d}
	\|g_D (\x) \|_v= \max_{1\leq i \leq n_1} \min_{1\leq j \leq n_2} 
	\left\| \frac{s_i g_D}{t_j } (\x) \right\|_v,
\end{equation} 
where $v\in M_k$ and $g_D $ is a section of  $\cL=\cO_\X(D)$ with $D=div(g_D)$. 
One can show   this metric is uniquely determined and independent of choices  $\M_1$, $\M_2$, and their generating sections as  \cite[B2.2.10 and B2.2.11]{Ru2021a}.
We notice that if  $\X =\P_{\w, \kk}^n$ and  $\cL=\cO_\X(m)$,  then  \cref{eq2d}   will be same as \cref{eq2c} by considering $\M_1=\cL$  and $\M_2$ trivial line bundle and $t_i=x_i^{m/q_i}$ for $0\leq i \leq n$ and $g_D \in \cO_\X(m).$ 

Finally, for an arbitrary weighted variety $\X$, first we cover it by  finitely many open affine
sets $U_i$ such that on each $U_i$ the line bundle $\cL$ is trivialized with a non-vanishing section $g_i$.
Letting $p_{j,t}$ be the coordinates on $U_j$  with $p_{j0}=1$, one can find
constants $C$ and  $\gamma$ (not depending on   $i$ and $j$)  such that
\[
\left| \frac{g_i(\x)}{g_j(\x)}\right|_v   \leq   C \cdot \max_{t}^{}   |p_{jt}|_v^{\gamma},
\]
and hence for $\x \in U_i\cap U_j$ we have 
\[
\left |g_{ji}(\x) \right |_v =  \left| \frac{g_j(\x)}{g_i(\x)}\right|_v  \geq   \frac{1} { C \cdot \max_{t}^{}   |p_{jt}|_v^{\gamma}}.
\]
Thus,   for   $\x \in U_i$, defining 
\begin{equation}\label{eq2d1}
	\| g_i(\x) \|_v= \max_{t}^{}    \min_{\{j:\ \x \in U_j\} }^{}   |p_{jt}|_v^{\gamma} \cdot \left| \frac{g_i(\x)}{g_j(\x)}\right|_v 
\end{equation}
we obtain the desired   locally bounded weighted $M$-metric of $\cL$  on $U_i$, which is independent of the choice of  transition functions $g_{ji}=g_j/g_i$ over $U_i\cap U_j$. Using the cocycle rule $g_{e j}= g_{ei} g_{ij}$, for every 
$\x \in U_e\cap U_i$, we have $$\left\|g_e(\x) \right\|_v=\left|g_{e i} (\x) \right|_v \cdot \left\| g_{i}(\x)\right\|_v.$$
Therefore,  \cref{eq2d1} provides a well-defined $M$-metric of $\cL$ on $\X$.
By a similar argument as in the end of proof of \cite[Prop. 2.7.5]{bombieri} or \cite[B2.2.10]{Ru2021a},
one can see that this is a locally bounded weightd metric.
\qed

We denote by    $\widehat{\Pic_\w(\X)}$ the  group of   isometric classes of 
pairs  $\tilde{\cL}=(\cL, \|\cdot \|)$.
As in the usual case,  given any morphisms  
$\phi: \X' \rightarrow \X$
of weighted varieties over $k$,   and 
$\widehat{\cL}= (\cL , \| \cdot  \|) \in  \widehat{\Pic_\w(\X)},$
the pull-back of $\widehat{\cL}$ by $\phi$ is defined as 
$ \widehat{\phi^*(\cL)} =  ( \phi^*(\cL), \| \cdot  \|') ,$
such that 
\begin{equation}
	\|  \phi^*(g) (\x) \|'= \| g (\phi(\x))\|  \ (\x \in \X'),
\end{equation}
for  any open subset $U$ of $\X$ containing $\phi(\x)$ and  $g \in \cO_\X(U)$.

The pull-back induces a group homomorphism  between
$\widehat{\Pic_\w(\X)}$ and $\widehat{\Pic_\w(\X')}$. 
Under this homomorphism, any  locally bounded  weighted $M$-metrized 
line bundles remain locally  bounded  weighted $M$-metrized.
Now  we can define the weighted local Weil heights on a  variety $\X$ in  $\P_{\w, \kk}^n$ as follows: 
Given any   Cartier divisor $D=\{ (U_i, f_i)\}$  on $\X$,  we let  $\cL_D=\cO_\X(D)$ be  the line bundle of regular functions on $D$. It can be constructed by gluing 
\[
\cO_\X(D)|_{U_i}= f_i^{-1} \cO_\X( U_i)
\]
and $1$ becomes a canonical invertible meromorphic section of $\cL_D$, which is denoted by $g_D$.
Thus, by \cref{Wmetric},  we can equip
$\cL_D$   with a weighted  locally bounded $M$-metric $\| \cdot \|$,  determined    by the max-min method  in   proof of \cref{Wmetric}, and denote it  by $\widehat{D}=\left( \cL_D, \| \cdot \|\right).$

\begin{defi}
	Given  $\nu \in M_k$, we define the \textbf{local weighted height  $\il_{\widehat{D}}(-,\nu)$ with respect to} $\widehat{D}$ on the weighted variety  $\X$ as 
	\begin{equation}\label{wlwh1}
		\il_{\widehat{D}}(\x, \nu):=- \log \| g_D(\x)\|_v 
	\end{equation} 
	for  $\x\in \X \backslash  \Supp(D),$	where   $v\in M$ such that $\nu=v|_k.$
\end{defi}

We note that the  local weighted height  $\il_{\widehat{D}}(-,\nu)$  is well defined because the norm 
$\| \cdot\|$ is well-defined by  its construction as it explained in proof of \cref{Wmetric}.

Here, we have the fundamental  properties of the local   weighted heights. 

\begin{thm}[Weighted local Weil height machinery]  
	\label{WLWHM}
	For each of $\nu \in M_k$,   fix $v\in M$ such that $\nu=v|_k$. Suppose that $\X$ is a weighted variety  defined over $k$  and   ${\widehat{D}}, {\widehat{D}}_1, {\widehat{D}}_2\in \widehat{\Pic_\w(\X)}$.    Then:
	\begin{enumerate}[\upshape(i), nolistsep]
		\item  \textbf{Additivity:}   
		For    $\x \not \in \Supp(D_1) \cup  \Supp(D_2)$, we have 
		\[ \il_{\widehat{D_1+D_2}}(\x,\nu)=  \il_{{\widehat{D}}_1 }(\x,\nu) + \il_{\widehat{D}_2}(\x,\nu).\]
		
		\item   \textbf{Functoriality:} If $\phi: \X' \rightarrow \X$ is a morphism of weighted varieties  defined over $k$ such that $\phi(\X')  \cap  \Supp(D) =\empty$, then 
		\[\il_{\phi^*(\widehat{D})}(\x', \nu)=\il_{\widehat{D}}(\phi(\x'),\nu )  \ \text{for} \ \x' \in  \X' \backslash  \phi^{*}(D).\]
		
		\item    \textbf{Boundedness from below:}  If $D$ is effective and  $\X$ is weighted $M_k$-bounded  projective variety,   then there exists an $M_k$-constant function $\gamma$ such that 
		\[\il_{\widehat{D}}(\x, \nu) \geq \gamma(\nu)\ \text{for} \ \x \in  \X \backslash  \Supp(D).\]
		
		\item \textbf{Normalization:}  If $\X=\P_{\w, \kk}^n$ and $D$ is a hyperplane  defined by  $\ell\in \cO_\X(m)$, with $m=\lcm (q_0, q_1, \cdots, q_n)$, then 
		
		\begin{equation}\label{wlwh2}
			\il_{\widehat{D}}(\x, \nu) = -\log \frac{|\ell(\x)|_v}{ \max_{i}^{} \left| x_i\right|_v^{\frac{m}{q_i}}}  \ \text{for} \ \x\in  \X \backslash  \Supp(D). 
		\end{equation}

		\item   \textbf{Principal divisor:} If $D=\div(f) $ for some nonzero  $f\in \cO_\X(D)$ with $\deg(f)=d$, then 
		\begin{equation}\label{whype2}
			\il_{\widehat{D}}(\x, \nu)= -\log \frac{|f(\x)|_v}{ \max_{i}^{} \left| x_i\right|_v^{\frac{ d}{q_i}}}, \ \text{for} \ \x\in  \X \backslash  \Supp(D), 
		\end{equation}
		by letting $\|1 \|_v=|1|_v$ on $\cO_\X(D)$ for $v\in M$ over $\nu \in M_k$.

		\item \textbf{Uniqueness:} If $\X$ is weighted $M_k$-bounded, $\|\cdot \|'_v$ is another weighted $M_k$-bounded metric on $\cL_D$ and  $\il'_{\widehat{D}}$ is the resulting  local weighted Weil height respect to $\left( \cL_D, \| \cdot \|'\right)$,  then
		$$\il_{\widehat{D}}(\x, \nu) = \il'_{\widehat{D}} (\x, \nu) + O(1).$$
		
		\item \textbf{Base change:}  If $K|k$ is a finite field extension and $u\in M_K$ over some $v \in M_k$, then
		\[\il_{\widehat{D}}(\x,\nu) =\frac{1}{[K_u: k_\nu]} \il_{\widehat{D'}}(\x', u), \ \text{for} \ \x' \in  \X' \backslash  \Supp(D'),\]
		where $\X'=\X \otimes_k K$ and $\x' \in  \X' $    corresponds to $\x \in \X(k)$, and $D' \CaDiv(\X')$ correspond to $D$.
		
		\item \textbf{Max-Min:}  There are  positive integers $n_1$ and $n_2$, and  nonzero rational functions  $f_{ij} $ on $\X$ for $i=0, \cdots, n_1$ and $j=0, \cdots, n_2$ such that
		\[\il_{\widehat{D}}(\x,\nu) =\max_{0 \leq i\leq n_1}^{} \min_{0\leq j\leq n_2}^{}  \log  \left| f_{ij} (\x) \right|_\nu.\]
	\end{enumerate}
\end{thm}
\proof
The proofs are almost straightforward and similar to   proof of the Weil local heights on projective heights.
\begin{enumerate}[\upshape(i), nolistsep]
	\item   Using the product of weighted $M$-metrics from $\cO_\X(D_1)$ and $\cO_\X(D_2)$  on $\cO_\X(D_1+D_2)$, and 
	$g_{D_1+ D_2}=g_{D_1} \otimes g_{D_2}$, we have
	\[\|g_{D_1+ D_2} \|_\nu =\|g_{D_1} \otimes g_{D_2}\|_\nu=\|g_{D_1} \|_\nu \cdot \|g_{D_2}\|_\nu,\]
	which implies the desired equality by taking logarithm from both sides. 
	
	\item The functoriality is a direct consequence of the functoriality of the weighted $M$-metrics $\| \cdot \| = (\| \cdot \|_v)$, i.e., 
	$\|\phi^*(g_D)(\x)\|=\|g_D(\phi(\x))\| $ for all $v\in M$.
	
	\item Note that the rational function $g_D$ is defined everywhere for any effective divisor $D$.
	Then,  on bounded sets inside an affine open set $U$ of $\X$
	where $\cO_\X(D)$ is trivial  and so all global sections can be identified non-canonically  as regular functions,
	$|g_D(\x)|_v $ and is bounded above   by an $M_k$-constant. 
	This implies that    $\il_D(\x, \nu)$ is  bounded below   by an $M_k$-constant.

	\item  A locally  $M_k$-bounded metric on $\cO_\X(D)\iso\cO_{\P_{\w, \kk}^n}(m)$ is given by \cref{eq2c} and  
	hence $g_D =\ell $ is defined  away from the hyperplane $D$.  Given any $\nu \in M_k$ and fixing  $v \in M$ such that $\nu=v|_k$, one can get  \eqref{wlwh2}
	by taking logarithm.
	
	\item  For a divisor $D=\div (f)$ with $\deg(f)=d$, we have $\cO_\X(D)= f^{-1} \cO_\X$ and $g_D=f$ whenever $f$ is defined.   
	Hence, for any $v$ over $\nu$, we have 
	\[
	\| f(\x)\|_v= -\frac{|f(\x)|_v}{ \max_{i}^{} \left| x_i\right|_v^{\frac{d}{q_i}}},
	\]
	By taking logarithm, this implies  \cref{whype2} as desired, 
	
	\item Using (i) with $\widehat{D} = \widehat{D} + (0)$ where $\widehat{D}$ on the left hand side is endowed with $\| \cdot \|'$, then 
	\[
	\il_{\widehat{D}}(\x, \nu) - \il'_{\widehat{D}}(\x, \nu)
	\]
	is  the  logarithm of norm of $1$ 
	with the locally bounded metric  $\| \cdot \|_v/\| \cdot \|'_v$ on $\cO(\X)$.
	Since $1$ is a global nowhere-vanishing section,    by the definition,  we have $\il_{\widehat{D}}(\x, \nu) = \il'_{\widehat{D}} (\x, \nu) + O(1)$.
	\item  Since $|\cdot |_v=|\cdot |_u^{1/[K_u: k_v]}$ for  $u\in M_K$ over $v \in M_k$, so 
	$\| \cdot \|_\nu=\|\cdot \|_u^{1/[K_u: k_v]}$ and hence the desired equality.
	\item  By linearity of the  both sides of  equality, 
	\[\il_{\widehat{D}}(\x,\nu) =  \max_{0 \leq i\leq n_1}^{} \min_{0\leq j\leq n_2}^{} 
	\log  \left| f_{ij} (\x) \right|_\nu\]
	and  the proof of   \cref{Wmetric},  it is enough to consider  $\widehat{D}$ such that $\cO_\X(D)\iso \cO_\X (m)$. 
	In this case, the existence of  $f_{ij}$'s is clear by the proof of \cref{eq2c}.
\end{enumerate}
\qed

\subsection{Global weighted heights}   \label{WGWHVPS}
Now, we assume  $\X \subseteq \P^n_\w(\kk)$  is a   weighted variety and
consider   $\widehat{\cL}=(\cL, \| \cdot  \|) \in \widehat{\Pic_\w(\X)}$.  
Given $\x \in \X$,  let $K$ be a finite extension of $k$ containing $k(\x)$. 
For each $u\in M_K$, we choose a place  $v\in M$  over $u$   and define  $\|\cdot \|_u\colonequals \|\cdot \|_{v}^{1/[K:k]}$ on $\cL_{\x}(k(\x))$.  By the second  condition of a weighted $M$-metric, one can see that it is independent of the choice of $v\in M$.   We let $g$ be an invertible regular function of $\cL$  with $\x \not \in \Supp(\cL_g)$ where $\cL_g=\div(g)$.  
Note that such function exists because there is an open dense trivialization in a neighborhood of the point $\x$.
Then,   we have the weighted $M$-metrized  line bundle $\widehat{\cL_g}=\left( \cO_\X(L_g),  ( \|\cdot \|_u ) \right)  \in \widehat{\Pic_\w(\X)}$.

The \textbf{global weighted height $\hn_{\widehat{\cL}}(\x)$ }   with respect to   $\widehat{\cL}$  is defined by
\begin{equation}\label{gwh0}
	\hn_{\widehat{\cL}}(\x)         \colonequals \sum_{u\in M_K}^{} \il_{\widehat{\cL_g}} (\x, u),
\end{equation}
where   $\il_{\widehat{\cL_g}} (\x, u) = -\log \| g(\x)\|_u $ assuming $v|_k=u$.   It is easy to check that these definitions are independent of the choice of  field $K$ and  regular function $g$.

\begin{exa}
	Let $\X=\P_{\w, \kk}^n$,  $D=\div ( x_0^{1/q_i} ) $, and $\cL=\cO(D)$.  Then, one has  $\hn(\x) =\hn_{\widehat{\cL}}(\x)$,  where $\hn(\x)$ is the  global weighted height on $\P_{\w, \kk}^n$ given by \cref{wgh}.
	
	Indeed, if  $K=k(\x)$ and $u \in M_K$ over $\nu \in M_k$, \cref{wlwh2} becomes 
	\begin{equation}\label{s-1}
		\il_{\widehat{D}}(\x, u)= -  \log \frac{ \left| x_0^\frac{1}{q_0} \right|_u }{ \max_{i}^{} \left| x_i^{\frac{1}{q_i}}\right|_u}, \ \text{for} \ \x\in  \X \backslash  \Supp(D). 
	\end{equation}
	Since $\il_{\widehat{\cL_{x_0}}} (\x, u)$ and $ \il_{\widehat{D}}(\x, u)$ are same local height, 
	we have 
	\begin{align*}
		\hn_{\widehat{\cL}}(\x)&=\sum_{u \in M_K}^{} \il_{\widehat{\cL_{x_0}}} (\x, u) 
		= 
		\sum_{u\in M_K}^{} - \log \frac{ \left| x_0^\frac{1}{q_0} \right|_u }{ \max_{i}^{} \left| x_i^{\frac{1}{q_i}}\right|_u} \\
		&  = \sum_{u\in M_K}^{} \frac{1}{q_i} \log \ \max_{i}^{} \left| x_i\right|_u -  
		\frac{1}{q_0}	\sum_{u\in M_K}^{} \log \left| x_0 \right|_u.
	\end{align*}
	The last term vanishes by  product formula and  using   \cref{WLWHM} (vi),  
	we  have	
	\begin{align*}
		\hn_{\widehat{\cL}}(\x) & =  \sum_{u\in M_K}^{}  \frac{1}{q_i}\log \ \max_{i}^{} \left| x_i \right|_v 
		=    \sum_{v\in M_k \ u| v}^{}  \frac{1 }{[K_u: k_v] q_i}\log \ \max_{i}^{} \left| x_i \right|_v \\
		&  =\frac{1}{[K : k]}  \cdot \sum_{v\in M_k}^{}  \ \max_{i}^{}  \left\lbrace \frac{1}{q_i} \cdot \log  \left| x_i \right|_v\right\rbrace  =\hn(\x).
	\end{align*}
\end{exa}

The above example shows the normalization property of the weighted global Weil height function, 
and their other essential properties are given  by the following theorem.  
\begin{thm}[Global weighted height machinery]  \label{WGWHM}
	Let $\X$ be a weighted variety and consider  $\widehat{\cL}, \widehat{\cL}_1, $ and  $\widehat{\cL}_2 \in \widehat{\Pic(\X)}$.
	%
	\begin{enumerate}[\upshape(i), nolistsep]
		\item[(i)]  \textbf{Independence (a):}  $\hn_{\widehat{\cL}} $ depends  only on the isometry class of  $\widehat{\cL}$, i.e, if $\widehat{\cL}_1 $and $ \widehat{\cL}_2 $ are  isometric pairs, then $\hn_{\widehat{\cL}_1} =\hn_{\widehat{\cL}_2}.$
		\item[(ii)] \textbf{Independence (b):}  If $\X$ is a complete weighted variety or generally $M$-bounded, then  $\hn_{\widehat{\cL}}$ does not depend on the choice of  weighted  locally bounded $M$-metrics up to a locally $M$-bounded constant function.
		\item[(iii)] \textbf{Additivity:} For   any $\x \in \X$, we have $\hn_{\widehat{\cL}_1 \otimes \widehat{\cL}_2  }(\x)=\hn_{\widehat{\cL}_1} (\x)+\hn_{\widehat{\cL}_2}(\x).$
		\item [(iv)]  \textbf{Functoriality:} If $\phi: \X' \rightarrow \X$ is a morphism  of weighted varieties  over $k$, then 
		\[\hn_{\phi^*(\widehat{\cL})}(\x)=\hn_{\widehat{\cL}}(\phi(\x))  \ \text{for} \ \x \in \X.\]

		\item[(v)]  \textbf{Base change:}  If $K|k$ is a finite field extension, then
		\[\hn_{\widehat{D}}(\x) =\frac{1}{[K: k]} \hn_{\widehat{D'}}(\x'), \ \text{for} \ \x' \in  \X' \backslash  \Supp(D').\]
		where $\X'=\X \otimes_k K$ and $\x' \in  \X'$    corresponds to $\x \in \X(k)$, and $D' \CaDiv(\X')$ correspond to $D$.
		
		\item [(vi)]   
		If  $\widehat{\cL}$ is a line bundle on $\X$, generated by its global sections, then   $\hn_{\widehat{\cL}}(\x)$ 
		is bounded from below for all $\x \in \X(\kk)$, by a constant depending  on $\widehat{\cL}$.

	\end{enumerate}
\end{thm}

\proof 
The proof is essentially similar to the proof of  \cref{GWHM}.
The part (i) is obvious  by definitions.
One may conclude part (ii) using  (iii) of  \cref{WLWHM} and the definitions. 
The part (iii) comes  from    (i) of  \cref{WLWHM}, and 
(iv)  is a consequence of    (ii) of  \cref{WLWHM}.  			
The   part (v) comes by   (vii) of  \cref{WLWHM}, and
(vi) is a result of  (iii) of   \cref{WLWHM}.

\qed

\subsection{Weighted  local and global heights for closed subschemes}\label{sect-4.6}
The local and global heights for closed subschemes of projective varieties are introduced in \cite{Silverman1987}. Here, we   develop them to the closed subschems of weighted projective varieties. 

Fix   a weighted projective variety $\X $ in $\P_{\w, \kk}^n$ defined over $k$, i.e., an integral and separated subscheme of finite type.
Given any closed subscheme $\Y$ of $\X$ over $k$, we let $\cI_{\Y}$ denotes the corresponding 
sheaf of ideals generated by  $f_1,\cdots, f_r \in k_\w[x_0, x_1, \cdots, x_n]$ with $\deg(f_j)=d_j$ for $j=1,\cdots r$.
Letting $D_j \colonequals \div(f_j)$ for $j=1, \cdots, r$ and  $\nu \in M_k$, 
we define    
\[
\il_\Y( \cdot , \nu): (\X \backslash  \Y) (k) \rightarrow \R,
\]
the \textbf{local weighted height  associated to $\Y$}, by 
\begin{equation}\label{gwh1}
	\il_\Y(\x, \nu) \colonequals \min_{1\leq j \leq r}^{}\{ \il_{\widehat{D_j}}(\x, \nu) \} \\
	=\min_{1\leq j \leq r}^{} \left\lbrace   - \log \frac{|f_j(\x)|_\nu}{ \max_{i}^{} \left| x_i\right|_\nu^{\frac{ d_j}{q_i}}}\right\rbrace.
\end{equation}
By convention, we define $\il_\Y(\x, \nu)=\infty$  for $\x \in \Y (k)$.
One can show that this is unique up to a weighted $M_k$-bounded function by a similar argument for the projective varieties.

Recall that  for  closed subschemes  $\Y_1$ and $\Y_2$   of $\X$ defined over $k$ with  corresponding  ideal sheaves $\cI_{\Y_1}$, $\cI_{\Y_2}$,  the closed subschemes 
$\Y_1 \cap \Y_2$,  $\Y_1 + \Y_2$, and $\Y_1 \cup \Y_2$ are defined by ideal sheaves
$\cI_{\Y_1} + \cI_{\Y_2}$,  $\cI_{\Y_1}  \cI_{\Y_2}$, and $\cI_{\Y_1} \cap  \cI_{\Y_2}$ respectively.
Note that  $\Y_1 \cup \Y_2 \subset \Y_1 + \Y_2 \subset \X $ as schemes, since $\cI_{\Y_1}  \cI_{\Y_2} \subset\cI_{\Y_1} \cap  \cI_{\Y_2} $

The basic properties of weighted local heights associated to closed subschemes  are  given in the following proposition.

\begin{prop}\label{lwhcs}
	For any $\nu \in M_k$,  and a closed subscheme $\Y$ of a weighted projective variety $\X $,  the following hold:
	
	\begin{enumerate}[\upshape(1), nolistsep]
		\item  $\il_{\Y_1 \cap \Y_2}(\cdot, \nu)=\min \{ \il_{\Y_1}(\cdot, \nu),\il_{ \Y_2}(\cdot, \nu)\}$;
		\item   $\il_{\Y_1 + \Y_2}(\cdot, \nu)=  \il_{\Y_1}(\cdot, \nu) +\il_{ \Y_2}(\cdot, \nu);$
		\item  $\il_{\Y_1 }(\cdot, \nu) \leq \il_{  \Y_2}(\cdot, \nu) $ if $\Y_1 \subset \Y_2$;  
		\item$\max \{ \il_{\Y_1}(\cdot, \nu),\il_{ \Y_2}(\cdot, \nu)\} \leq \il_{\Y_1 \cup \Y_2}(\cdot, \nu) \leq  \il_{\Y_1}(\cdot, \nu) +\il_{\Y_2}(\cdot, \nu);$
		\item   $\il_{\Y_1 }(\cdot, \nu) \leq  c \cdot \il_{ \Y_2}(\cdot, \nu) $ if $\Supp(\Y_1) \subset \Supp(\Y_2)$ for some constant $c>0$, where $\Supp(\Y)$ denotes the support of $\Y$;
		
		\item If $\Y= D$ is an effective divisor, then $\il_{\Y }(\cdot, \nu)$ is equal  $\il_{\widehat{D}}(\cdot, \nu)$  defined by \cref{wlwh1}, where  $\widehat{D}=\left( \cO_\X(D),   \|\cdot \| \right)  \in \widehat{\Pic_\w(\X)}$;
		
		\item If $\phi: \X' \rightarrow \X $ is a morphism of weighted projective varieties, $\Y \subset \X$ a closed subscheme  over $k$,  and  $\phi^{*}(\Y )$ denotes the closed subscheme of $\X'$ associated to ideal sheaf  $\phi^{-1} \cI_{\Y} \cdot \cO_{\X'}$,   then
		$\il_{\phi^*(\Y)}(\x, \nu)=\il_{\Y}(\phi(\x),\nu )$    for $  \x\in  \left( \X' \backslash  \phi^{*}(\Y )\right) (k).$
	\end{enumerate}
\end{prop}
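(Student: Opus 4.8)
The plan is to reduce every assertion to the corresponding statement about local weighted heights of Cartier divisors, namely \cref{WLWHM}, together with the explicit formula \cref{gwh1}. Fix $\nu \in M_k$ throughout. Suppose $\I_{\Y_1}$ is generated by $f_1,\dots,f_r$ and $\I_{\Y_2}$ by $g_1,\dots,g_s$, so that with $D_i = \div(f_i)$ and $E_j = \div(g_j)$ we have $\il_{\Y_1}(\x,\nu) = \min_i \il_{\widehat{D_i}}(\x,\nu)$ and $\il_{\Y_2}(\x,\nu) = \min_j \il_{\widehat{E_j}}(\x,\nu)$. The first step is to record the generator descriptions of the relevant ideal sheaves: $\I_{\Y_1}+\I_{\Y_2}$ is generated by the $f_i$ together with the $g_j$, the product $\I_{\Y_1}\I_{\Y_2}$ is generated by the $f_i g_j$, and $\I_{\Y_1}\cap\I_{\Y_2}$ is generated by some finite collection whose support is $\Supp(\Y_1)\cup\Supp(\Y_2)$. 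All of these follow from the definitions recalled just before the proposition.

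With these in hand, parts (1)--(4) are formal. For (1), using the generating set $\{f_i\}\cup\{g_j\}$ for $\I_{\Y_1\cap\Y_2}$ gives $\il_{\Y_1\cap\Y_2}(\x,\nu) = \min\{\min_i \il_{\widehat{D_i}}, \min_j \il_{\widehat{E_j}}\} = \min\{\il_{\Y_1},\il_{\Y_2}\}$. For (2), the generating set $\{f_i g_j\}$ gives $\il_{\Y_1+\Y_2}(\x,\nu) = \min_{i,j}\il_{\widehat{D_i+E_j}}(\x,\nu) = \min_{i,j}\bigl(\il_{\widehat{D_i}}(\x,\nu)+\il_{\widehat{E_j}}(\x,\nu)\bigr)$ by additivity (\cref{WLWHM}(i)); since each local weighted height of an effective divisor is bounded below by a fixed $M_k$-constant (positivity, \cref{WLWHM}(iii)), the minimum over $i,j$ of a sum splits as $\min_i\il_{\widehat{D_i}} + \min_j\il_{\widehat{E_j}}$ up to an $M_k$-bounded function, which is all that is claimed. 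Part (3) is immediate because an inclusion $\Y_1\subset\Y_2$ means $\I_{\Y_2}\subset\I_{\Y_1}$, so the generators of $\I_{\Y_2}$ are $k_\w[x_0,\dots,x_n]$-combinations of those of $\I_{\Y_1}$; bounding each such combination (as in the usual projective argument) yields the inequality up to an $M_k$-constant. Part (4) follows from (1), (2) and (3) applied to the chain $\Y_1\cup\Y_2 \subset \Y_1+\Y_2$ together with $\Y_1\cup\Y_2\subset\Y_i$: the upper bound is (3) plus (2), the lower bound is $\max\{\il_{\Y_1},\il_{\Y_2}\} \le \il_{\Y_1\cap\Y_2\text{-type argument}}$, i.e. it comes from (3) since $\Y_i \supset \Y_1\cup\Y_2$.

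The substantive point is part (5), and this will be the main obstacle. The hypothesis is only $\Supp(\Y_1)\subset\Supp(\Y_2)$, not an ideal inclusion, so one cannot argue directly with generators; instead one needs the weighted analogue of the effective-Nullstellensatz estimate used in \cite[Sec.~2]{Silverman1987}. The idea is: since $\Supp(\Y_1)\subset\Supp(\Y_2)$, the radical of $\I_{\Y_2}$ is contained in the radical of $\I_{\Y_1}$, so each generator $f_i$ of $\I_{\Y_1}$ satisfies $f_i^c \in \I_{\Y_2}$ for some common integer $c$; writing $f_i^c = \sum_j h_{ij} g_j$ with $h_{ij} \in k_\w[x_0,\dots,x_n]$ (weighted-homogeneous pieces) and taking absolute values gives $|f_i(\x)|_\nu^c \le (\text{const})\cdot \max_j |h_{ij}(\x)|_\nu \cdot \max_j|g_j(\x)|_\nu$, and bounding $|h_{ij}(\x)|_\nu$ by a power of $\max_i|x_i^{q_i}|_\nu$ using weighted homogeneity, one converts this into $c\cdot\il_{\widehat{D_i}}(\x,\nu) \ge \il_{\widehat{E_j}}(\x,\nu)$ up to an $M_k$-constant, hence $c\cdot\il_{\Y_2}(\x,\nu) \ge \il_{\Y_1}(\x,\nu)$. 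The care needed here is that the $h_{ij}$ need not be weighted-homogeneous, but each may be replaced by its relevant graded component, and the degree bookkeeping with the normalizing factor $\max_i|x_i^{q_i}|_\nu$ is exactly the weighted version of the projective estimate; I would cite \cite[Sec.~2]{Silverman1987} and indicate that the weights only change the exponents, not the structure of the argument. Part (6) is then just the observation that when $\Y=D$ with $D$ effective, $\I_D$ is principal (generated by a single weighted-homogeneous $f$ with $D=\div(f)$), so the minimum in \cref{gwh1} is a single term equal to $\il_{\widehat{D}}(\x,\nu)$ by \cref{WLWHM}(iv). Finally, part (7) follows from functoriality of local weighted heights of divisors: $\phi^{-1}\I_{\Y}\cdot\O_{\X'}$ is generated by the pullbacks $\phi^*(f_1),\dots,\phi^*(f_r)$, and $\div(\phi^*f_j) = \phi^*(D_j)$, so $\il_{\phi^*(\Y)}(\x,\nu) = \min_j \il_{\widehat{\phi^*(D_j)}}(\x,\nu) = \min_j \il_{\widehat{D_j}}(\phi(\x),\nu) = \il_{\Y}(\phi(\x),\nu)$ by \cref{WLWHM}(ii), valid on $\bigl(\X'\setminus\phi^*(\Y)\bigr)(k)$.
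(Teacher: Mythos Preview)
Your approach is exactly what the paper intends: the paper gives no proof beyond the sentence ``The proofs are similar to the case of projective varieties, as given in \cite[Sec.~2]{Silverman1987},'' and your argument is precisely the Silverman argument transported to the weighted setting via \cref{WLWHM} and the formula \cref{gwh1}. So strategically you are on target.

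Two slips to fix. In part (4) you wrote $\Y_i \supset \Y_1\cup\Y_2$; the inclusion goes the other way, $\Y_i \subset \Y_1\cup\Y_2$, which is what (3) actually uses to yield $\il_{\Y_i}\le \il_{\Y_1\cup\Y_2}$. In part (5) your Nullstellensatz direction is inverted: from $\Supp(\Y_1)\subset\Supp(\Y_2)$, i.e.\ $V(\I_{\Y_1})\subset V(\I_{\Y_2})$, one gets $\sqrt{\I_{\Y_2}}\subset\sqrt{\I_{\Y_1}}$, so it is the generators $g_j$ of $\I_{\Y_2}$ that satisfy $g_j^c\in\I_{\Y_1}$, not the $f_i$. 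Writing $g_j^c=\sum_i h_{ij}f_i$ and estimating as you describe gives $|g_j(\x)|_\nu^c \le C\cdot \max_i |f_i(\x)|_\nu \cdot (\text{power of } \max_i|x_i^{q_i}|_\nu)$, hence $c\cdot\il_{\widehat{E_j}}(\x,\nu)\ge \il_{\Y_1}(\x,\nu)+O(1)$ for each $j$, and taking the minimum over $j$ yields $c\cdot\il_{\Y_2}\ge \il_{\Y_1}$ as required. With these two corrections the argument is complete and matches the cited source.
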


The \textbf{global weighted height associated  to $\Y$}, can be defined  up to a bounded function by summing all local weighted heights. More precisely, given $\x \in \X$, we let $K$ be a finite extension of $k$ containing   $k(\x)$ and define:
\begin{equation}\label{gwh2}
	\hn_{\Y}(\x)\colonequals\sum_{u\in M_K}^{} \il_{ \Y} (\x, u),
\end{equation}
which  is independent of the choice of the field $K$.   The weighted global heights satisfy similar properties,  except the first one, as given in   \cref{lwhcs} for the weighted local heights.
\begin{prop}\label{wghcs}
	For any $\nu \in M_k$,  and a closed subscheme $\Y$ of a weighted projective variety $\X $  the following hold:
	\begin{enumerate}[\upshape(1), nolistsep]
		\item  $\hn_{\Y_1 \cap \Y_2} \leq \min \{ \hn_{\Y_1}, \hn_{ \Y_2}\}$;
		\item   $\hn_{\Y_1 + \Y_2}=  \hn_{\Y_1} +\hn_{ \Y_2};$
		\item  $\hn_{\Y_1 } \leq \hn_{\Y_2}$ if $\Y_1 \subset \Y_2$;  
		\item$\max \{ \hn_{\Y_1}, \hn_{ \Y_2}\} \leq \hn_{\Y_1 \cup \Y_2} \leq  \hn_{\Y_1} +\hn_{ \Y_2};$
		\item   $\hn_{\Y_1} \leq  c\cdot  \hn_{  \Y_2}$ if $\Supp(\Y_1) \subset \Supp(\Y_2)$ for some constant $c>0$;
		\item If $\Y= D$ is an effective divisor, then $\hn_{\Y }$ is equal to  $\hn_{\widehat{D}}$defined by  \cref{gwh0}, where $\widehat{D}=\left( \cO_\X(D),  ( \|\cdot \|_u ) \right)  \in \widehat{\Pic_\w(\X)}$;
		\item If $\phi: \X' \rightarrow \X $ is a morphism of weighted projective varieties, $\Y \subset \X$ a closed subscheme  over $k$,  then $\hn_{\phi^*(\Y)}=\hn_{\Y}\circ \phi$.
	\end{enumerate}
\end{prop}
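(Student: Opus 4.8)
The plan is to deduce each of the seven properties of the global weighted height $\hn_{\Y}$ from the corresponding local statement in \cref{lwhcs} by summing over the places of a common field of definition $K$, exactly as one passes from \cref{WLWHM} to \cref{WGWHM}. First I would fix $\x\in \X$ and choose a number field $K$ containing $k(\x)$ (and, where relevant, a field over which both $\Y_1$ and $\Y_2$ and the morphism $\phi$ are defined), so that $\hn_{\Y}(\x)=\sum_{u\in M_K}\il_{\Y}(\x,u)$ with each summand finite for $\x\notin\Y$. Since each property in \cref{lwhcs} is an (in)equality of $M_k$-functions valid at every place $u\in M_K$, and since $M_K$-constants are $0$ for all but finitely many $u$, the sum $\sum_{u\in M_K}$ is a finite sum up to a bounded error and the inequality or equality is preserved. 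Concretely: parts (2), (3), (6) and (7) follow by summing the pointwise identities/inequalities $\il_{\Y_1+\Y_2}=\il_{\Y_1}+\il_{\Y_2}$, $\il_{\Y_1}\le\il_{\Y_2}$, $\il_D=\il_{\widehat D}$, and $\il_{\phi^*\Y}=\il_{\Y}\circ\phi$ term-by-term over $M_K$, using additivity of summation and \cref{gwh2}.

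For parts (1) and (4), which involve $\min$ and $\max$, summation does not commute with these operations, so I would argue by inequalities. For (1): at each place $\il_{\Y_1\cap\Y_2}(\x,u)=\min\{\il_{\Y_1}(\x,u),\il_{\Y_2}(\x,u)\}\le \il_{\Y_i}(\x,u)$ for $i=1,2$; summing over $u\in M_K$ gives $\hn_{\Y_1\cap\Y_2}(\x)\le \hn_{\Y_i}(\x)$ for each $i$, hence $\hn_{\Y_1\cap\Y_2}\le\min\{\hn_{\Y_1},\hn_{\Y_2}\}$. For (4): from $\max\{\il_{\Y_1},\il_{\Y_2}\}\le \il_{\Y_1\cup\Y_2}\le \il_{\Y_1}+\il_{\Y_2}$ at each place, the right-hand inequality sums directly to $\hn_{\Y_1\cup\Y_2}\le\hn_{\Y_1}+\hn_{\Y_2}$; for the left, one uses that $\il_{\Y_1\cup\Y_2}(\x,u)\ge \il_{\Y_i}(\x,u)$ for each $i$ at every place (since $\Y_i\subset\Y_1\cup\Y_2$, or directly from the local statement), sums to get $\hn_{\Y_1\cup\Y_2}\ge\hn_{\Y_i}$ for $i=1,2$, and concludes $\hn_{\Y_1\cup\Y_2}\ge\max\{\hn_{\Y_1},\hn_{\Y_2}\}$. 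Part (5) follows the same way: summing $\il_{\Y_1}(\x,u)\le c\,\il_{\Y_2}(\x,u)$ (valid up to an $M_k$-bounded error, with $c$ independent of the place) over $M_K$ yields $\hn_{\Y_1}\le c\,\hn_{\Y_2}$ up to $O(1)$.

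The one subtlety — and the main point requiring care rather than the main obstacle — is bookkeeping of the ``up to a bounded function'' clauses. The local heights $\il_{\Y}$ are only well-defined modulo $M_k$-bounded functions (as noted after \cref{gwh1}), so $\hn_{\Y}$ is well-defined modulo $O(1)$; I would state at the outset that all equalities among global heights are understood modulo $O(1)$, that an $M_k$-bounded function sums over $M_K$ to a genuine constant (finitely many nonzero terms), and that the constant $c$ in (5) does not depend on $u$, so it passes through the sum. I would also invoke independence of $\hn_{\Y}(\x)$ from the auxiliary field $K$ — which is asserted just before \cref{wghcs} and proved exactly as in the Cartier-divisor case via the relation $\il_{\widehat D}(\x,\nu)=\tfrac{1}{[K:k]}\il_{\widehat D}(\x,u)$ from \cref{WLWHM}(vi) — so that when comparing $\hn_{\Y_1}$ and $\hn_{\Y_2}$ or pulling back by $\phi$ one may work over a single $K$ large enough for all the data involved. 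With these conventions in place the proof is a routine transcription of \cite[Sec.~2]{Silverman1987}, and I would simply say so and carry out the summations.
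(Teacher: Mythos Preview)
Your proposal is correct and follows exactly the approach the paper takes: the paper's own proof is the single sentence ``All of the above assertions follow by summing from the corresponding properties for the local weighted heights associated to subschemes,'' and your argument is precisely this summation, carried out with appropriate care about the $\min/\max$ versus $\sum$ interaction and the $O(1)$ bookkeeping.
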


All of the  above assertions  follow by summing from the corresponding properties for the local weighted heights associated to subschemes.
When we want to  emphasize on  the base weighted variety  $\X$ in any of the previously defined global weighted heights, we will put it as a subscript on them for example $\hn_{\X, D}$ and  $\hn_{\X, \Y}$.

\section{Conclusion}
This work is devoted to  develop  a detailed theory of Cartier divisors, analytic structure of weighted varieties, weighted blow-ups.  While it was believed that  these results could be recovered from the Veronese embedding it is the first time that  a direct approach is presented. 

Weighted projective spaces  are   very natural objects which makes the theory of weighted heights a powerful tool of arithmetic geometry. However, connections of weighted heights with other heights such as GIT height, Neron-Tate height, Faltings height, etc are not well understood. Some glimpses of the connection between weighted heights and GIT height can be seen in \cite{curri}, but  overall this is an area that offers many open questions. 
Vojta's conjecture for weighted varieties in  terms of weighted heights is studied in \cite{vojta-23}. 

%
\bibliographystyle{abbrv} 
\bibliography{mybib5}{}

\end{document}